\newtheorem{thm}{Theorem}[section]
\newtheorem{cor}[thm]{Corollary}
\newtheorem{defn}[thm]{Definition}
\newtheorem{exm}[thm]{Example}
\numberwithin{equation}{section}
\begin{document}
\title{ weakly $(m,n)$-closed ideals and $(m,n)$-von Neumann regular rings}
\author{David F. Anderson, Ayman Badawi, and  Brahim Fahid}
\address{Depaertment of Mathematics, The University of Tennessee, Knoxville, TN 37996-1320, U.S.A.}\email{anderson@math.utk.edu}
\address{Department of Mathematics  $\&$ Statistics, The American University of
Sharjah, P.O.  Box 26666, Sharjah, United Arab Emirates}\email{abadawi@aus.edu}
\address{Department of Mathematics, Faculty of Sciences, B.P. 1014, Mohammed V University, Rabat, Morocco} \email{fahid.brahim@yahoo.fr}
\subjclass[2010]{Primary 13A15; Secondary 13F05, 13G05.}
\keywords{prime ideal, radical ideal, $2$-absorbing ideal, $n$-absorbing ideal,
$(m,n)$-closed ideal, weakly $(m,n)$-closed ideal, $(m,n)$-von Neuman regular.}

\begin{abstract}
Let $R$ be a commutative ring with $ 1 \neq 0$, $I$  a proper ideal of $R$, and $m$ and $n$ positive integers.
In this paper,  we define $I$ to be a {\it weakly $(m,n)$-closed ideal}
if $ 0\neq x^{m}\in I$ for $x \in R$ implies $x^{n} \in I$,
and $R$ to be an {\it $(m,n)$-von Neumann regular ring} if for every $x \in R$,
there is an $r \in R$ such that $x^mr = x^n$.
A number of results concerning weakly $(m, n)$-closed ideals and $(m,n)$-von Neumann regular rings are given.
\end{abstract}

\date{\today}
 \maketitle

\section{Introduction}  \label{sec1}

Let $R$ be a commutative ring with $1 \neq 0$, $I$ a proper ideal of $R$, and $n$ a positive integer.
As in \cite{BA}, $I$ is an {\it $n$-absorbing} (resp., {\it strongly $n$-absorbing}) {\it ideal} of $R$
if whenever $x_1 \cdots x_{n+1} \in  I$ for $x_1, \ldots, x_{n+1} \in R$
(resp., $I_1 \cdots I_{n+1} \subseteq I$ for ideals $ I_1, \ldots, I_{n+1}$ of $R$),
then there are $n$ of the $x_i$'s (resp., $n$ of the $I_i$'s) whose product is in $I$.
As in \cite{BARECENT}, $I$ is a {\it semi-$n$-absorbing ideal} of $R$ if
$x^{n+1} \in I$  for $x \in R$ implies $x^n \in I$; and
for positive integers $m$ and $n$, $I$ is an {\it $(m, n)$-closed ideal} of $R$
if $ x^m \in I$ for $x \in R$ implies $x^n \in I$.
And, as in \cite{MSD},
$I$ is a {\it weakly $n$-absorbing}
(resp., {\it strongly weakly $n$-absorbing}) $ideal$ of $R$ if whenever
$0 \neq x_1 \cdots x_{n+1} \in  I$ for $x_1, \ldots , x_{n+1} \in R$
(resp., $0 \neq I_1 \cdots I_{n+1} \subseteq I$ for ideals $ I_1, \ldots, I_{n+1}$ of $R$),
then there are $n$ of the $x_i$'s (resp., $n$ of the $I_i$'s) whose product is in $I$.

In this paper, we define $I$ to be a {\it weakly semi-$n$-absorbing ideal} of $R$
if $0 \neq x^{n+1} \in I$ for $x \in R$ implies $x^n \in I$. More generally,
for positive integers $m$ and $n$, we  define $I$ to be a {\it weakly $(m,n)$-closed ideal} of $R$
if $ 0\neq x^{m}\in I$ for $x \in R$ implies $x^{n} \in I$. Thus $I$ is a weakly
semi-$n$-absorbing ideal if and only if $I$ is a weakly $(n+1,n)$-closed ideal.
Moreover, an $(m,n)$-closed ideal is a weakly
$(m,n)$-closed ideal, and the two concepts agree when $R$ is reduced.
Every proper ideal is weakly $(m,n)$-closed for $m \leq n$;
so we usually assume that $m > n$.

The above definitions all concern generalizations of prime ideals.
A $1$-absorbing ideal is just a prime ideal,
and a weakly $1$-absorbing ideal is just a weakly prime ideal
(a proper ideal $I$ of $R$ is a {\it weakly prime ideal} if
$0 \neq xy \in I$ for $x, y \in R$ implies $x \in I$ or $y \in I$).
A proper ideal is a radical ideal if and only if it is $(2,1)$-closed.
However, a weakly $(2,1)$-closed ideal need not be a weakly radical ideal
(a proper ideal $I$ of $R$ is a {\it weakly radical ideal} if $0 \neq x^n \in I$ for $x \in R$ and
$n$ a positive integer implies $x \in I$) (see Example~\ref{ex1}(b)).

Weakly prime ideals and weakly radical ideals were studied in \cite{SA},
and weakly radical (semiprime) ideals have been studied in more detail in \cite{BSEMI}.
The concept of $2$-absorbing ideals was introduced in \cite{B}
and then extended to $n$-absorbing ideals in \cite{BA}.
Related concepts include $2$-absorbing primary ideals (see \cite{YTB1}),
weakly $2$-absorbing ideals (see \cite{BD}), weakly 2-absorbing primary ideals
(see \cite{YTB}),  and $(m, n)$-closed ideals (see \cite{BARECENT}).
Other generalizations and related concepts are investigated
in \cite{SA}, \cite{EF}, \cite{BSEMI}, \cite{BD}, \cite{FD1}, \cite{FD}, and \cite{MSD}.
For a survey on $n$-absorbing ideals, see \cite{BD1}.

Let $R$ be a commutative ring and $m$ and $n$ positive integers.
We define $R$ to be an {\it $(m,n)$-von Neumann regular ring} if for every $x \in R$,
there is an $r \in R$ such that $x^mr = x^n$.
Thus a $(2,1)$-von Neumann regular ring is just a von Neumann regular ring.
In this paper, we study weakly $(m,n)$-closed ideals, $(m,n)$-von Neumann regular rings,
and the connections between the two concepts.

Let $m$ and $n$ be positive integers with $m > n$.
Among the many results in this paper, we show in Theorem~\ref{thm-nil} that
if $I$ is a weakly $(m,n)$-closed, but not $(m,n)$-closed, ideal of $R$,
then $I \subseteq Nil(R)$.
In Theorem~\ref{add2}, we determine when a proper ideal of $R_1 \times R_2$
is weakly $(m,n)$-closed, but not $(m,n)$-closed; and in
Theorem~\ref{add-2}, we investigate when
a proper ideal of $R(+)M$ is weakly $(m,n)$-closed, but not $(m,n)$-closed.
In Section~\ref{sec3}, we introduce and investigate $(m,n)$-von Neumann regular elements and
$(m,n)$-von Neumann regular rings. It is shown in Theorem~\ref{add3} that every proper ideal of $R$ is weakly $(m,n)$-closed
if and only if every non-nilpotent element of $R$ is $(m,n)$-von Neumann regular
and $w^m = 0$ for every $w \in Nil(R)$. In Theorem~\ref{add4},
we show that every proper ideal of $R$ is $(m,n)$-closed if and only if $R$ is $(m,n)$-von Neumann regular.
Finally, we define the concepts of $n$-regular and $\omega$-regular commutative rings as a way
to measure how far a zero-dimensional commutative ring is from being von Neumann regular.

We assume throughout this paper that all rings are commutative with $ 1\neq 0$, all $R$-modules are unitary,
and $f(1) = 1$ for all ring homomorphisms $f : R \longrightarrow T$.
For such a ring $R$, let $Nil(R)$ be its ideal of nilpotent elements,
$Z(R)$ its set of zero-divisors, $U(R)$ its group of units, char$(R)$ its characteristic, and dim$(R)$ its (Krull) dimension.
Then $R$ is {\it reduced} if $Nil(R) = \{0\}$ and $R$ is {\it quasilocal} if it has exactly one maximal ideal.
As usual, $\mathbb{N}$, $\mathbb{Z}$, and $\mathbb{Z}_n$ will denote the positive integers, integers,
and integers modulo $n$, respectively. Several of our results use the $R(+)M$ construction
as in \cite{HJ}. Let $R$ be a commutative ring and $M$ an $R$-module.
Then $R(+)M = R \times M$ is a commutative ring with identity $(1, 0)$ under addition defined by
$(r,m) +(s, n) = (r + s,m + n)$ and multiplication defined by
$(r,m)(s, n) = (rs, rn + sm)$. Note that $(\{0\}(+)M)^{2} = \{0\}$; so $\{0\}(+)M \subseteq Nil(R(+)M)$.
	
\bigskip

\section{Properties of weakly $(m,n)$-closed ideals} \label{sec2}

In this section, we  give some basic properties of weakly $(m,n)$-closed ideals
and investigate weakly $(m,n)$-closed ideals in several classes of commutative rings.
We start by recalling the definitions of weakly semi-$n$-absorbing and weakly $(m,n)$-closed ideals.

\begin{defn} \label{def-princ}
Let $R$ be a commutative ring, $I$ a proper ideal of $R$, and $m$ and $n$ positive integers.

\begin{enumerate}
  \item  $I$ is a {\it weakly semi-$n$-absorbing ideal}
	of $R$ if $0\neq x^{n+1} \in I$ for $x \in R$ implies $x^n \in I$.
  \item  $I$ is a {\it weakly $(m,n)$-closed ideal} of $R$
	if $0 \neq x^m \in I$ for $x \in R$ implies $x^n \in I$.
\end{enumerate}

\end{defn}

The proof of the next result follows easily from the definitions, and thus will be omitted.

\begin{thm} \label{basic}
Let $R$ be a commutative ring and $m$ and $n$ positive integers.

  \begin{enumerate}
    \item  If $I$ is a weakly $n$-absorbing ideal of $R$,
		then $I$ is weakly semi-$n$-absorbing (i.e., weakly $(n+1,n)$-closed).
    \item If $I$ is a weakly $(m, n)$-closed ideal of $R$,
		then $I$ is weakly $(m, n')$-closed
		for every positive integer $n' \geq n$.
    \item If $I$ is a weakly $n$-absorbing ideal of $R$,
		then $I$ is weakly $(m, n)$-closed for every positive integer $m$.
		\item  An intersection of weakly $(m,n)$-closed ideals of $R$ is weakly $(m,n)$-closed.
  \end{enumerate}
\end{thm}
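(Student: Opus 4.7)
The plan is to dispatch the four parts in turn; each follows by unwinding the definitions, and the only one that needs a small argument is part (3).

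For part (1), suppose $0\neq x^{n+1}\in I$. Writing $x^{n+1}=x_1\cdots x_{n+1}$ with $x_i=x$ for all $i$ and applying the weakly $n$-absorbing hypothesis produces $n$ of the $x_i$'s whose product lies in $I$; since all $x_i$ coincide with $x$, that product is $x^n$, which is exactly what weakly $(n+1,n)$-closed requires. For part (2), suppose $0\neq x^m\in I$ and let $n'\geq n$. The weakly $(m,n)$-closed hypothesis gives $x^n\in I$, and then $x^{n'}=x^n\cdot x^{n'-n}\in I$ since $I$ is an ideal. Part (4) is also immediate: if $\{I_\alpha\}_{\alpha\in\Lambda}$ is a family of weakly $(m,n)$-closed ideals of $R$ and $0\neq x^m\in\bigcap_\alpha I_\alpha$, then for each $\alpha$ we have $0\neq x^m\in I_\alpha$, hence $x^n\in I_\alpha$, and therefore $x^n\in\bigcap_\alpha I_\alpha$.

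Part (3) is the only step that requires a short argument. For $m\leq n$ the conclusion is trivial as noted in the introduction: if $0\neq x^m\in I$ then $x^n=x^m\cdot x^{n-m}\in I$. For $m\geq n+1$ I would induct on $m$, with the base case $m=n+1$ supplied by part (1). For the inductive step, assume the result for exponents up to $m-1$ and suppose $0\neq x^m\in I$ with $m\geq n+2$. Factor the product as
\[
x^m=\underbrace{x\cdot x\cdots x}_{n\text{ times}}\cdot x^{m-n},
\]
a product of $n+1$ factors lying in $I$ and nonzero by hypothesis. By the weakly $n$-absorbing property, some $n$-fold subproduct lies in $I$: either the $n$ single $x$'s, giving $x^n\in I$ directly, or $n-1$ single $x$'s together with $x^{m-n}$, giving $x^{m-1}\in I$. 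In the latter case $x^{m-1}\neq 0$ (since $x^m\neq 0$), and the inductive hypothesis applied at exponent $m-1$ yields $x^n\in I$.

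No genuine obstacle is present; the only mild point to notice is the nonzero bookkeeping in part (3), namely that when the weakly $n$-absorbing hypothesis is invoked iteratively, the intermediate powers of $x$ remain nonzero because $x^m\neq 0$ forces $x^k\neq 0$ for all $k\leq m$. Everything else is a one-line consequence of the definitions, consistent with the authors' remark that the proof is omitted.
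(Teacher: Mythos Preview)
Your proof is correct. The paper itself omits the proof entirely, remarking only that it ``follows easily from the definitions,'' so your straightforward unwinding of the definitions---including the small induction for part~(3) with the observation that $x^m\neq 0$ forces $x^{m-1}\neq 0$---is exactly the kind of argument the authors had in mind.
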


While an $(m,n)$-closed ideal is always weakly $(m,n)$-closed, the converse need not hold.
If an ideal is $(m,n)$-closed, then it is also $(m',n')$-closed
for all positive integers $m' \leq m$ and $n' \geq n$ \cite[Theorem 2.1(3)]{BARECENT}.
However, a weakly $(m,n)$-closed ideal need not be weakly $(m',n)$-closed for $m' < m$.
We next give two examples to illustrate these differences.

\begin{exm} \label{ex1}
$(a)$ Let $R = \mathbb{Z}_8$ and  $I = \{0,4 \}$.
Then $I$ is weakly $(3,1)$-closed since $x^3 = 0$ for every nonunit $x$ in $R$.
However, $I$ is not $(3,1)$-closed since  $2^3 = 0 \in I$ and $2 \notin I$, and
$I$ is not weakly $(2,1)$-closed since $0 \neq 2^2 = 4 \in I$ and $2 \notin I$.

$(b)$   Let $R = \mathbb{Z}_{16}$ and $I = \{0, 8\}$.
Then $I$ is weakly $(2,1)$-closed since $8$ is not a square in $\mathbb{Z}_{16}$.
However, $I$ is not $(2,1)$-closed since $4^2 = 0 \in I$ and $4 \notin I$,
and $I$ is not a weakly radical ideal (and thus not weakly prime) since $0 \neq 2^3 = 8 \in I$ and $2 \notin I$.
\end{exm}

The following definition will be useful for studying weakly $(m,n)$-closed ideals
that are not ($m,n)$-closed (cf. \cite[Definition 2.2]{BSEMI}).

\begin{defn}
Let $R$ be a commutative ring, $m$ and $n$ positive integers,
and $I$ a weakly $(m,n)$-closed ideal of $R$.
Then $a \in R$ is an $(m,n)$-{\it unbreakable-zero element}
of $I$ if $a^{m} = 0$ and $a^{n} \notin I$. (Thus $I$ has an $(m,n)$-{\it unbreakable-zero element
if and only if $I$ is not $(m,n)$-closed}.)
\end{defn}

\begin{thm} \label{thm-1} $($cf. \cite[Theorem 2.3]{BSEMI}$)$
Let $R$ be a commutative ring, $m$ and $n$ positive integers,
and $I$ a weakly $(m,n)$-closed ideal of $R$.
If $a$ is an $(m,n)$-unbreakable-zero element of $I$,
then $(a + i )^m = 0$ for every $i \in I$.
\end{thm}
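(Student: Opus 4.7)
The plan is a direct contradiction argument using the binomial theorem, exactly in the spirit of \cite[Theorem 2.3]{BSEMI}. Suppose for contradiction that there exists $i \in I$ with $(a+i)^m \neq 0$. I will first show $(a+i)^m \in I$ and then deduce $a^n \in I$, contradicting the assumption that $a$ is an $(m,n)$-unbreakable-zero element.

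For the first step, expand
\[
(a+i)^m = a^m + \sum_{k=1}^{m} \binom{m}{k} a^{m-k} i^k.
\]
Since $a^m = 0$ and each term in the sum contains the factor $i \in I$, we obtain $(a+i)^m \in I$. Combined with our contradiction hypothesis $(a+i)^m \neq 0$, the weakly $(m,n)$-closed property of $I$ forces $(a+i)^n \in I$.

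For the second step, expand similarly
\[
(a+i)^n = a^n + \sum_{k=1}^{n} \binom{n}{k} a^{n-k} i^k.
\]
Again, every term in the sum lies in $I$. Rearranging,
\[
a^n = (a+i)^n - \sum_{k=1}^{n} \binom{n}{k} a^{n-k} i^k \in I,
\]
which contradicts $a^n \notin I$. Hence $(a+i)^m = 0$ for every $i \in I$.

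There is no real obstacle here; the only thing to be careful about is correctly separating the $k=0$ term from the rest of the binomial expansion in both steps, since that is where $a^m$ (which vanishes) and $a^n$ (which must not lie in $I$) appear. Everything else is formal manipulation using $i \in I$.
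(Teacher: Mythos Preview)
Your proof is correct and follows essentially the same approach as the paper: both use the binomial expansions of $(a+i)^m$ and $(a+i)^n$, observe that every term involving a positive power of $i$ lies in $I$, and combine this with the weakly $(m,n)$-closed hypothesis to force $(a+i)^m = 0$. The only cosmetic difference is that the paper phrases it directly (first noting $(a+i)^n \notin I$, hence $(a+i)^m$ must be zero) rather than as an explicit contradiction argument.
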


\begin{proof}
Let $i \in I$. Then $(a + i )^m = a^m  +\sum_{k=1}^{m}\binom{m}{k}a^{m-k}i^k =
0 + \sum_{k=1}^{m}\binom{m}{k}a^{m-k}i^{k} \in I$, and similarly,
$(a+i)^n \notin I$ since $a^n \notin I$. Thus $(a + i )^m = 0$ since $I$ is weakly $(m,n)$-closed.
\end{proof}

\begin{thm} $($cf. \cite[p. 839]{SA} and \cite[Theorems 2.4 and 2.5]{BSEMI}$)$   \label{thm-nil}
Let $R$ be a commutative ring, $m$ and $n$ positive integers,
and $I$ a weakly $(m,n)$-closed ideal of $R$.
If $I$ is not $(m,n)$-closed, then $I \subseteq Nil(R)$.
Moreover, if $I$ is not $(m,n)$-closed and char$(R) = m$ is prime,
then $i^m = 0$ for every $i \in I$.
\end{thm}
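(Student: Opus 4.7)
The plan is to exploit the $(m,n)$-unbreakable-zero element machinery already set up in Theorem~\ref{thm-1}. Since $I$ is weakly $(m,n)$-closed but not $(m,n)$-closed, the definition guarantees an element $a \in R$ with $a^m = 0$ and $a^n \notin I$. The whole strategy for the first statement hinges on showing that every element of $I$ differs from $a$ by a nilpotent, so that $I \subseteq a + Nil(R) \subseteq Nil(R)$ (since $a$ itself is nilpotent).

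Concretely, I would fix an arbitrary $i \in I$ and invoke Theorem~\ref{thm-1} directly: it tells us that $(a+i)^m = 0$, so $a+i$ is nilpotent. Since $a^m = 0$, also $a$ is nilpotent. In a commutative ring the nilpotent elements form the ideal $Nil(R)$, so the difference $i = (a+i) - a$ is nilpotent, which gives $i \in Nil(R)$. As $i \in I$ was arbitrary, $I \subseteq Nil(R)$.

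For the second assertion, assume in addition that $\mathrm{char}(R) = m$ with $m$ prime. The point is that the freshman's dream applies: for any $i \in I$,
\[
(a+i)^m = a^m + i^m,
\]
because every intermediate binomial coefficient $\binom{m}{k}$ with $1 \le k \le m-1$ is divisible by the prime $m$ and hence vanishes in $R$. Combining this with $a^m = 0$ and the identity $(a+i)^m = 0$ from Theorem~\ref{thm-1} yields $i^m = 0$, as desired.

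The argument is short because Theorem~\ref{thm-1} does the heavy lifting; the only substantive observation is the sum-of-nilpotents trick for the first part and the characteristic-$p$ Frobenius identity for the second. I do not anticipate a real obstacle — the main thing to be careful about is to phrase the first part so that one picks a single unbreakable-zero witness $a$ and then runs through all $i \in I$, rather than trying to handle each $i \in I$ in isolation (which would not have an obvious source of nilpotence).
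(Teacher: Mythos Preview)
Your argument is correct and matches the paper's proof essentially line for line: pick an $(m,n)$-unbreakable-zero element $a$, use Theorem~\ref{thm-1} to get $(a+i)^m=0$, and conclude $i=(a+i)-a\in Nil(R)$; for the ``moreover'' part, the paper likewise invokes the freshman's dream in characteristic $m$ prime.
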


\begin{proof}
Since $I$ is a weakly $(m,n)$-closed ideal of $R$ that is not $(m,n)$-closed,
$I$ has an $(m,n)$-unbreakable-zero element $a$. Let $i \in I$.
Then $a^m = 0$, and $(a + i)^m = 0$ by Theorem \ref{thm-1};  so $a, a + i \in Nil(R)$.
Thus $i = (a + i) - a \in Nil(R)$;
so $I \subseteq Nil(R)$.

The ``moreover'' statement is clear since $0 = (a + i)^m = a^m + i^m = i^m$
when char$(R) = m$ is prime.
\end{proof}

The next two theorems are the analogs of the results for $(m,n)$-closed ideals
in \cite[Theorem 2.8]{BARECENT} and \cite[Theorem 2.10]{BARECENT}, respectively.
Their proofs are similar, and thus will be omitted.

\begin{thm}
Let $R$ be a commutative ring, I a proper ideal of $R$,
$S \subseteq R \setminus \{0\}$ a multiplicative set,
and $m$ and $n$ positive integers.
If $I$ is a weakly $(m,n)$-closed ideal of $R$,
then $I_S$ is a weakly $(m,n)$-closed ideal of $R_S$.
\end{thm}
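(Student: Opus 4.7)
The plan is to take an arbitrary $x/s \in R_S$ satisfying $0 \neq (x/s)^m \in I_S$ and show that $(x/s)^n \in I_S$. First I would translate each of the two hypotheses into an elementwise condition in $R$: the membership $(x/s)^m = x^m/s^m \in I_S$ yields some $t \in S$ with $tx^m \in I$, while the nonvanishing $(x/s)^m \neq 0$ in $R_S$ says that no element of $S$ annihilates $x^m$ in $R$.

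Next, I would clear denominators by passing to $y := tx \in R$. Since $t^{m-1} \in R$ and $tx^m \in I$, we get $y^m = t^m x^m = t^{m-1}(tx^m) \in I$. Crucially, $y^m \neq 0$ in $R$: if instead $t^m x^m = 0$, then $t^m \in S$ would annihilate $x^m$, contradicting $(x/s)^m \neq 0$ in $R_S$. Now the weakly $(m,n)$-closed hypothesis applies to $y$ and delivers $y^n = t^n x^n \in I$. Dividing by $(ts)^n \in S$ gives $(x/s)^n = t^n x^n/(t^n s^n) \in I_S$, finishing the argument.

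The main obstacle, which distinguishes this argument from the corresponding proof for $(m,n)$-closed ideals in \cite[Theorem 2.8]{BARECENT}, is preserving the nonvanishing condition when we lift from $R_S$ back to $R$. A generic element $x/s$ representing a nonzero element of $R_S$ need not itself have $x^m \neq 0$ in $R$, so one cannot apply the weakly $(m,n)$-closed hypothesis to $x$ directly; the trick is to replace $x$ by the adjusted representative $y = tx$, which satisfies both $y^m \in I$ and $y^m \neq 0$ simultaneously. One should also tacitly note that $I_S$ is still a proper ideal of $R_S$, since otherwise the conclusion is vacuous; in the nontrivial case $I \cap S = \emptyset$, so $1 \neq 0$ in $R_S$ and the framework makes sense.
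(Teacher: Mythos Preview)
Your argument is correct and is exactly the natural localization argument one expects here; the paper in fact omits the proof entirely, remarking only that it is analogous to \cite[Theorem~2.8]{BARECENT}. Your observation that one must pass from $x$ to $y = tx$ so that $y^m \in I$ and $y^m \neq 0$ hold simultaneously is precisely the extra wrinkle introduced by the ``weakly'' hypothesis, and you have handled it cleanly.
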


\begin{thm} \label{extthm}
Let $f : R \longrightarrow T$ be a homomorphism of commutative rings and $m$ and $n$ positive integers.
 \begin{enumerate}
    \item If $f$ is injective and $J$ is a weakly $(m,n)$-closed ideal of $T$,
		then $f^{-1}(J)$ is a weakly $(m,n)$-closed ideal of $R$.
		In particular, if $R$ is a subring of $T$ and $J$ is a weakly $(m,n)$-closed ideal of $T$,
		then $J \cap R$ is a weakly $(m,n)$-closed ideal of $R$.
		
    \item  If $f$ is surjective and $I$ is a weakly $(m,n)$-closed ideal of $R$ containing ker$f$,
		then $f(I)$ is a weakly $(m,n)$-closed ideal of $T$. In particular, if $I$ is a weakly $(m,n)$-closed ideal of $R$ and
		$J \subseteq I$ is an ideal of $R$, then $I/J$ is a weakly $(m,n)$-closed ideal of $R/J$.
    \end{enumerate}
\end{thm}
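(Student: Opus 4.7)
The plan is to verify the weakly $(m,n)$-closed condition directly from the definition in each part, using only the interaction between $f$ and taking powers, together with the injectivity or surjectivity hypotheses to control when an element is zero or a preimage exists.

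For part (1), I would start with $x \in R$ satisfying $0 \neq x^m \in f^{-1}(J)$ and apply $f$: this gives $f(x)^m = f(x^m) \in J$. Since $f$ is injective and $x^m \neq 0$, we have $f(x^m) \neq 0$, so $0 \neq f(x)^m \in J$. Because $J$ is weakly $(m,n)$-closed, $f(x)^n = f(x^n) \in J$, i.e., $x^n \in f^{-1}(J)$. Properness of $f^{-1}(J)$ follows from $f(1) = 1 \notin J$. The ``in particular'' statement is obtained by taking $f$ to be the inclusion $R \hookrightarrow T$, whose inverse image of $J$ is $J \cap R$.

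For part (2), let $y \in T$ with $0 \neq y^m \in f(I)$. By surjectivity, write $y = f(x)$ for some $x \in R$, so $f(x^m) = y^m \in f(I)$; this means $x^m \in I + \ker f$, and since $\ker f \subseteq I$ by hypothesis, we get $x^m \in I$. The key observation is that $y^m \neq 0$ forces $x^m \neq 0$, since $x^m = 0$ would give $y^m = f(0) = 0$. So $0 \neq x^m \in I$, and the weakly $(m,n)$-closed property of $I$ yields $x^n \in I$, hence $y^n = f(x^n) \in f(I)$. Properness of $f(I)$ follows from the correspondence of ideals under the induced isomorphism $R/\ker f \cong T$: a proper ideal of $R$ containing $\ker f$ maps to a proper ideal of $T$. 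The ``in particular'' statement is then immediate by taking $f$ to be the quotient map $R \to R/J$, whose kernel $J$ is contained in $I$ by assumption.

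Neither part presents a real obstacle; both are direct consequences of the definition, with the only subtlety being the verification that $x^m \neq 0$ in the surjective case (handled by contrapositive) and the automatic nonvanishing of $f(x^m)$ in the injective case. This mirrors the standard pullback/pushforward arguments for prime, radical, and $(m,n)$-closed ideals, with the nonzero hypothesis tracking cleanly through $f$.
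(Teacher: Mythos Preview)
Your argument is correct and is exactly the natural direct verification one would expect. In fact the paper omits the proof of this theorem entirely, noting only that it is analogous to the corresponding result for $(m,n)$-closed ideals; your write-up supplies precisely those routine details, with the injectivity/surjectivity hypotheses used just where they are needed to track the nonzero condition.
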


In the following theorems, we determine when an ideal of
$R_1 \times R_2$ is weakly $(m,n)$-closed, but not $(m,n)$-closed.
(Recall that an ideal of $R_1 \times R_2$ has the form $I_1 \times I_2$
for ideals $I_1$ of $R_1$ and $I_2$ of $R_2$.)
It is easy to determine when an ideal of $R_1 \times R_2$ is $(m,n)$-closed.

\begin{thm}  \label{addpro} $($cf. \cite[Theorem 2.12]{BARECENT}$)$
Let $R = R_{1} \times R_{2}$, where $R_{1}$ and $R_{2}$ are commutative rings,
$J$  a proper ideal of $R$,
and $m$ and $n$ positive integers.
Then the following statements are equivalent.
\begin{enumerate}
\item  $J$ is an $(m,n)$-closed ideal of $R$.
\item  $J = I_1 \times R_2$, $R_1 \times I_2$, or $I_1 \times I_2$ for
$(m,n)$-closed ideals $I_1$ of $R_1$ and $I_2$ of $R_2$.
\end{enumerate}
\end{thm}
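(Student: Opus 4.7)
The plan is to handle the two implications separately, with the forward direction being essentially componentwise.

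For (2)$\Rightarrow$(1), in each of the three cases, a direct coordinate-by-coordinate check suffices. Take, for instance, $J = I_1 \times I_2$ with each $I_i$ being $(m,n)$-closed, and suppose $(x_1,x_2)^m = (x_1^m,x_2^m) \in J$. Then $x_i^m \in I_i$, so by hypothesis $x_i^n \in I_i$, and $(x_1,x_2)^n \in J$. The cases $J = R_1 \times I_2$ and $J = I_1 \times R_2$ are handled identically (with the ``full'' component automatically satisfied).

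For (1)$\Rightarrow$(2), first recall that every ideal of $R_1 \times R_2$ has the form $I_1 \times I_2$ for ideals $I_i$ of $R_i$, and properness of $J$ forces at least one $I_i \neq R_i$. Split into cases. If $I_1 \ne R_1$ and $I_2 = R_2$, given $x \in R_1$ with $x^m \in I_1$, test the element $(x,1)$: we have $(x,1)^m = (x^m,1) \in I_1 \times R_2 = J$, so $(m,n)$-closedness of $J$ yields $(x^n,1) = (x,1)^n \in J$, i.e.\ $x^n \in I_1$; the symmetric case is identical. If both $I_1$ and $I_2$ are proper, test with $(x,0)$ and $(0,y)$: for $x \in R_1$ with $x^m \in I_1$ we get $(x,0)^m = (x^m,0) \in I_1 \times I_2$ because $0 \in I_2$, hence $(x^n,0) \in J$ gives $x^n \in I_1$; interchanging coordinates handles $I_2$.

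There is really no substantive obstacle here because the $(m,n)$-closed condition, unlike its weakly $(m,n)$-closed analog, has no ``$\neq 0$'' clause to worry about. This is precisely why the $(x,0)$ and $(0,y)$ trick works cleanly: even though $(x,0)^m$ and $(0,y)^m$ may equal $0$, they still trigger the $(m,n)$-closed hypothesis. I would expect the genuine difficulty to appear only in the weakly $(m,n)$-closed analogue (Theorem~\ref{add2}), where one must separately track when these products vanish and introduce auxiliary conditions on $R_1$ and $R_2$ to compensate.
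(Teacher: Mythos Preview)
Your proof is correct and is precisely the direct verification from the definitions that the paper has in mind; the paper's own proof is just the single line ``This follows directly from the definitions.'' Your closing remark about why the weakly $(m,n)$-closed case is harder is also on target and anticipates exactly the content of Theorem~\ref{add2}.
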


\begin{proof}
This follows directly from the definitions.
\end{proof}

The analog of $(1) \Rightarrow (2)$ of Theorem~\ref{addpro} clearly holds
for weakly $(m,n)$-closed ideals by Theorem~\ref{extthm}(2),
but our next theorem shows that the analog of $(2) \Rightarrow (1)$
does not hold for weakly $(m,n)$-closed ideals.

\begin{thm} \label{add1}
Let $R = R_{1} \times R_{2}$, where $R_{1}$ and $R_{2}$ are commutative rings,
$I_1$ a proper ideal of $R_1$,
and $m$ and $n$ positive integers.
Then the following statements are equivalent.
  \begin{enumerate}
    \item $I_1\times R_2$ is a weakly $(m,n)$-closed ideal of $R$.
    \item  $I_1$ is an $(m, n)$-closed ideal of $R_1$.
    \item $I_1 \times R_2$ is an $(m, n)$-closed ideal of $R$.
  \end{enumerate}
	A similar result holds for $R_1 \times I_2$ when $I_2$ is a proper ideal of $R_2$.
\end{thm}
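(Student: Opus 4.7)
The plan is to establish the cycle $(3) \Rightarrow (1) \Rightarrow (2) \Rightarrow (3)$, so that the three conditions collapse together. The implication $(3) \Rightarrow (1)$ is immediate from the definitions, since every $(m,n)$-closed ideal is weakly $(m,n)$-closed. The implication $(2) \Rightarrow (3)$ is a direct application of Theorem~\ref{addpro}, specifically the case $J = I_1 \times R_2$ with $I_2 = R_2$ (noting that $R_2$ is trivially an $(m,n)$-closed ``improper'' ideal of itself, or simply reading off the statement). So the real content lies in showing $(1) \Rightarrow (2)$.

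For $(1) \Rightarrow (2)$, I would start with an arbitrary $a \in R_1$ satisfying $a^m \in I_1$, and aim to conclude $a^n \in I_1$. The natural idea would be to lift $a$ to $(a,0) \in R_1 \times R_2$, but that fails because $(a,0)^m = (a^m, 0)$ might equal $(0,0)$, in which case the weakly $(m,n)$-closed hypothesis tells us nothing. The trick is instead to lift $a$ to $(a, 1)$. Then $(a,1)^m = (a^m, 1)$, which lies in $I_1 \times R_2$ and is automatically nonzero because its second coordinate is $1 \neq 0$. The weakly $(m,n)$-closed hypothesis then forces $(a,1)^n = (a^n, 1) \in I_1 \times R_2$, which yields $a^n \in I_1$ by projecting onto the first coordinate.

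The main obstacle, or rather the conceptual point to highlight, is precisely the reason one cannot prove the weakly $(m,n)$-closed analog of $(2) \Rightarrow (1)$ in Theorem~\ref{addpro}: the obstruction created by the ``nonzero'' hypothesis disappears as soon as one of the factors in the product $I_1 \times I_2$ is the full ring, because one can always witness the $m$-th power with a nonzero second coordinate. This is exactly what makes the lift $(a,1)$ work, and it is why $I_1 \times R_2$ being merely weakly $(m,n)$-closed already forces the stronger condition that $I_1$ itself be $(m,n)$-closed. The proof for $R_1 \times I_2$ is symmetric, exchanging the roles of the two factors.
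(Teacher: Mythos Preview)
Your proof is correct and follows essentially the same approach as the paper: both establish the cycle $(3)\Rightarrow(1)\Rightarrow(2)\Rightarrow(3)$, and for the key implication $(1)\Rightarrow(2)$ both use the lift $(a,1)$ so that the second coordinate guarantees $(a,1)^m\neq(0,0)$. The only cosmetic difference is that the paper phrases $(1)\Rightarrow(2)$ as a proof by contradiction via an $(m,n)$-unbreakable-zero element, whereas you argue directly; the underlying idea is identical.
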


\begin{proof}
$(1) \Rightarrow (2)$  $I_1$ is a weakly $(m, n)$-closed ideal of $R_1$ by Theorem~\ref{extthm}(2).
If $I_1$ is not an $(m, n)$-closed  ideal of $R_1$,
then $I_1$ has an $(m, n)$-unbreakable-zero element $a$.
Thus $(0, 0) \not = (a, 1)^m \in I_1 \times R_2$, but $(a, 1)^n \not \in I_1 \times R_2$, a contradiction.
Hence $I_1$ is an $(m, n)$-closed ideal of $R_1$.

$(2) \Rightarrow (3)$ This is clear (cf.
\cite[Theorem 2.12]{BARECENT}).

$(3) \Rightarrow (1)$ This is clear by definition.
\end{proof}

\begin{thm} \label{add2}
Let $R = R_{1} \times R_{2}$, where $R_{1}$ and $R_{2}$ are commutative rings,
$J$  a proper ideal of $R$, and $m$ and $n$ positive integers.
Then the following statements are equivalent.
  \begin{enumerate}
    \item $J$ is a weakly $(m,n)$-closed ideal of $R$ that is not $(m,n)$-closed.
    \item  $J = I_{1} \times I_{2}$ for proper ideals $I_1$ of $R_1$ and $I_2$ of $R_2$ such that either
		\begin{enumerate}
		\item  $I_{1}$ is a weakly $(m, n)$-closed ideal
		of $R_1$ that is not $(m, n)$-closed, $y^m = 0$ whenever $y^m \in I_2$ for $y \in R_2$
		(in particular, $i^m = 0$ for every $i \in I_2$), and if $0 \not = x^m \in I_1$ for some $x \in R_1$,
		then $I_2$ is an $(m, n)$-closed ideal of $R_2$, or
		\item $I_{2}$ is a weakly $(m, n)$-closed ideal of $R_2$
		that is not $(m, n)$-closed, $y^m = 0$
		whenever $y^m \in I_1$ for $y \in R_1$ (in particular, $i^m = 0$ for every $i \in I_1$),
		and if $0 \not = x^m \in I_2$ for some $x \in R_2$, then $I_1$ is an $(m, n)$-closed ideal of $R_1$.
		\end{enumerate}
  \end{enumerate}
\end{thm}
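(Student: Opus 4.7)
The plan is to prove $(1) \Leftrightarrow (2)$ by exploiting the product decomposition together with the $(m,n)$-unbreakable-zero machinery from Theorem~\ref{thm-1}. For the forward direction, I first note that every ideal $J$ of $R_1 \times R_2$ has the form $I_1 \times I_2$, and that both $I_1$ and $I_2$ must be proper: if either equals the full factor, Theorem~\ref{add1} forces $J$ to be $(m,n)$-closed, contradicting~(1). Next, testing weakly $(m,n)$-closedness of $J$ against pairs $(x,0)$ and $(0,y)$ shows immediately that both $I_1$ and $I_2$ are themselves weakly $(m,n)$-closed. Since $J$ is not $(m,n)$-closed, it admits an $(m,n)$-unbreakable-zero element $(a,b)$, so $a^m=b^m=0$ and at least one of $a^n \notin I_1$, $b^n \notin I_2$ holds. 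By symmetry, assume $a^n \notin I_1$; then $a$ is an $(m,n)$-unbreakable-zero element of $I_1$, putting us in case (2)(a).

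The two auxiliary conditions in (2)(a) are then extracted by evaluating $J$ on carefully chosen pairs. To see that $y^m \in I_2$ implies $y^m = 0$, I would consider $(a,y)$: one computes $(a,y)^m = (0,y^m) \in J$, so if $y^m \neq 0$ the weakly $(m,n)$-closedness of $J$ yields $(a,y)^n \in J$ and hence $a^n \in I_1$, a contradiction. For the conditional clause, suppose $0 \neq x^m \in I_1$ for some $x \in R_1$ and pick $y \in R_2$ with $y^m \in I_2$; then $(x,y)^m = (x^m, y^m) \in J$ and is nonzero, so weakly $(m,n)$-closedness delivers $y^n \in I_2$, showing $I_2$ is $(m,n)$-closed.

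For the reverse direction, assume (2)(a); case (2)(b) is symmetric. That $J$ fails to be $(m,n)$-closed follows by lifting an $(m,n)$-unbreakable-zero element $c$ of $I_1$ to $(c,0) \in R$. To verify weakly $(m,n)$-closedness, suppose $(0,0) \neq (x,y)^m \in J$, so $x^m \in I_1$ and $y^m \in I_2$. The decisive observation is a dichotomy on $x^m$: if $x^m = 0$, then the hypothesis $y^m \in I_2 \Rightarrow y^m = 0$ forces $(x,y)^m = 0$, contradicting nontriviality; therefore $x^m \neq 0$, and weakly $(m,n)$-closedness of $I_1$ gives $x^n \in I_1$ while the conditional clause of (2)(a) gives $y^n \in I_2$, so $(x,y)^n \in J$.

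The argument is essentially bookkeeping once the structural observation about unbreakable-zero elements is in hand; there is no deep obstacle. The subtlest point, and the one most likely to be mis-stated, is the exact form of the two auxiliary conditions in (2)(a): the condition \emph{$y^m \in I_2$ implies $y^m = 0$} is genuinely stronger than \emph{$i^m = 0$ for $i \in I_2$}, and the conditional clause \emph{``if some $0 \neq x^m \in I_1$''} cannot be dropped, since without a nonzero $m$-th power in $I_1$ the ``$0 \neq$'' hypothesis in the definition of weakly $(m,n)$-closedness of $J$ never forces $y^n \in I_2$. Care must therefore be taken to pair up exactly these conditions in both directions of the equivalence.
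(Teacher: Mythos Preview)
Your argument is correct and follows essentially the same route as the paper's proof: both use Theorem~\ref{add1} to force $I_1,I_2$ proper, then exploit an $(m,n)$-unbreakable-zero element together with test pairs $(a,y)$ and $(x,y)$ to extract the two auxiliary conditions in (2)(a), and run the same dichotomy on $x^m$ for the converse. Your write-up is in fact slightly more explicit than the paper's in verifying directly (via $(x,0)$ and $(0,y)$) that each $I_i$ is weakly $(m,n)$-closed.
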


\begin{proof}
$(1) \Rightarrow (2)$ Since $J$ is not an $(m, n)$-closed ideal of $R$,
by Theorem \ref{add1} we have $J = I_1\times I_2$, where $I_1$ is a proper ideal of $R_1$
and $I_2$ is a proper ideal of $R_2$. Since $J$ is not an $(m, n)$-closed ideal of $R$,
either $I_1$ is a weakly $(m, n)$-closed ideal of $R_1$  that is not
$(m, n)$-closed or $I_2$ is a weakly  $(m, n)$-closed ideal of $R_2$
that is not $(m, n)$-closed. Assume that $I_1$ is a weakly $(m, n)$-closed ideal of $R_1$
that is not $(m, n)$-closed. Thus $I_1$ has an $(m, n)$-unbreakable-zero element $a$.
Assume that $y^m \in I_2$ for $y \in R_2$. Since $a$ is an $(m, n)$-unbreakable-zero element
of $I_1$ and $(a, y)^m \in J$, we have $(a, y)^m = (0, 0)$.
Hence $y^m = 0$ (in particular, $i^m = 0$ for every $i \in I_2$).
Now assume that $0 \not = x^m \in I_1$ for some $x \in R_1$.
Let $y \in R_2$ such that $y^m \in I_2$. Then $(0, 0) \not = (x, y)^m \in J$.
Thus $y^n \in I_2$, and hence $I_2$ is an $(m, n)$-closed ideal of $R_2$.
Similarly, if $I_2$ is a weakly $(m, n)$-closed ideal of $R_2$ that is not $(m, n)$-closed,
then $y^m = 0$ whenever $y^m \in I_1$ for $y \in R_1$ (in particular, $i^m = 0$ for every $i \in I_1$),
and if $0 \not = x^m \in I_2$ for some $x \in R_2$, then $I_1$ is an $(m, n)$-closed ideal of $R_1$

$(2) \Rightarrow (1)$ Suppose that $I_{1}$ is a weakly $(m, n)$-closed proper ideal of $R_1$
that is not $(m, n)$-closed, $y^m = 0$ whenever $y^m \in I_2$  for $y \in R_2$
(in particular, $i^m = 0$ for every $i \in I_2$), and if $0 \not = x^m \in I_1$
for some $x \in R_1$, then $I_2$ is an $(m, n)$-closed ideal of $R_2$.
Let $a$ be an $(m, n)$-unbreakable-zero element of $I_1$. Then $(a, 0)$ is an
$(m, n)$-unbreakable-zero element of $J$. Thus $J$ is not an $(m, n)$-closed ideal of $R$.
Now assume that $(0, 0) \not = (x, y)^m  = (x^m, y^m) \in J$ for $x \in R_1$ and $y \in R_2$.
Then $(0, 0) \not = (x, y)^m = (x^m, 0) \in J$ and $0 \not = x^m \in I_1$.
Since $I_1$ a weakly $(m, n)$-closed ideal of $R_1$ and $I_2$ is an
$(m, n)$-closed ideal of $R_2$, we have $(x, y)^n \in J$.
Similarly, assume that $I_{2}$ is a weakly $(m, n)$-closed ideal of $R_2$
that is not $(m, n)$-closed, $y^m = 0$ whenever $y^m \in I_1$ for $y \in R_1$
(in particular, $i^m = 0$ for every $i \in I_1$),  and if $0 \not = x^m \in I_2$
for some $x \in R_2$, then $I_1$ is an $(m, n)$-closed ideal of $R_1$.
Then again, $J$ is a weakly $(m,n)$-closed ideal of $R$ that is not $(m,n)$-closed.
\end{proof}

We next consider when certain ideals of $R(+)M$ are weakly $(m,n)$-closed.
%First, we give the module-theoretic analog of $(m,n)$-closed and weakly $(m,n)$-closed.

%\begin{defn} Let $R$ be a commutative ring, $M$ an $R$-module, and  $m$ and $n$ positive integers.
%A proper submodule $N$ of $M$ is an {\it $(m, n)$-closed} submodule of $M$
%if $mx \in N$ for $x \in M$ implies $nx \in N$.  A proper submodule $N$ of $M$
%is a {\it weakly $(m, n)$-closed} submodule of $M$ if $0 \not = mx \in N$
%for $x \in M$ implies $nx \in N$. If $N$ is a weakly $(m, n)$-closed submodule
%of $M$ that is not $(m, n)$-closed, then $a\in M$ is an
%{\it $(m, n)$-unbreakable-zero} element of $N$ if $ma = 0 \in N$ and $na \notin N$.
%\end{defn}

%Recall that a proper submodule $N$ of an $R$-module $M$ is a {\it prime submodule}
%of $M$ if $ax \in N$ for $a \in R$ and $x \in M$ implies that $aM \subseteq N$ or $x \in N$.

\begin{thm} \label{add-2}
Let $R$ be a commutative ring, $I$ a proper ideal of $R$,
$M$ an $R$-module, and $m$ and $n$ postive integers.
Then the following statements are equivalent.
\begin{enumerate}
\item $I(+)M$ is a weakly $(m,n)$-closed ideal of $R(+)M$ that is not $(m, n)$-closed.
\item $I$ is a weakly $(m,n)$-closed ideal of $R$ that is not $(m, n)$-closed
and $m(a^{m-1}M) = 0$ for every $(m, n)$-unbreakable-zero element $a$ of $I$.
\end{enumerate}
\end{thm}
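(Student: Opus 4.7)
The plan is to reduce the problem to a single computation in $R(+)M$: by induction one checks that $(r,x)^k = (r^k, kr^{k-1}x)$ for all $k \geq 1$. Using this, membership $(r,x)^m \in I(+)M$ is equivalent to $r^m \in I$ (the second coordinate lies in $M$ automatically), while $(r,x)^m = (0,0)$ is equivalent to the conjunction $r^m = 0$ and $mr^{m-1}x = 0$. With these translations, the theorem should follow by careful bookkeeping.

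For $(1)\Rightarrow(2)$, I would first show that $I$ is weakly $(m,n)$-closed: if $0 \neq r^m \in I$, then $(r,0)^m = (r^m, 0) \neq (0,0)$ lies in $I(+)M$, so by hypothesis $(r,0)^n = (r^n, 0) \in I(+)M$, giving $r^n \in I$. Next, because $I(+)M$ is not $(m,n)$-closed, there exists $(r,x)$ with $(r,x)^m = (0,0)$ and $(r,x)^n \notin I(+)M$; this forces $r^m = 0$ and $r^n \notin I$, so $r$ is an $(m,n)$-unbreakable-zero element of $I$. Finally, for any such element $a$ and any $x \in M$, suppose for contradiction that $ma^{m-1}x \neq 0$. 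Then $(a,x)^m = (0, ma^{m-1}x) \neq (0,0)$ but lies in $I(+)M$, so weak $(m,n)$-closedness gives $(a,x)^n \in I(+)M$, i.e.\ $a^n \in I$, contradicting that $a$ is unbreakable-zero. Hence $m(a^{m-1}M) = 0$.

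For $(2)\Rightarrow(1)$, the non-$(m,n)$-closed assertion is immediate: pick any $(m,n)$-unbreakable-zero $a$ of $I$; then $(a,0)^m = (0,0) \in I(+)M$ while $(a,0)^n = (a^n,0) \notin I(+)M$. For weak $(m,n)$-closedness, suppose $(0,0) \neq (r,x)^m \in I(+)M$, so $r^m \in I$. If $r^m \neq 0$, the assumption on $I$ gives $r^n \in I$, hence $(r,x)^n \in I(+)M$. If $r^m = 0$, then necessarily $mr^{m-1}x \neq 0$; if additionally $r^n \notin I$, then $r$ would be an $(m,n)$-unbreakable-zero element of $I$, and the hypothesis $m(r^{m-1}M) = 0$ would force $mr^{m-1}x = 0$, a contradiction. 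Therefore $r^n \in I$ in either case, and $(r,x)^n \in I(+)M$.

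The one subtle point — and the step that requires the most care — is the direction $(1)\Rightarrow(2)$, where one must rule out the possibility that an unbreakable-zero element $a$ of $I$ combines with some $x \in M$ to violate weak $(m,n)$-closedness of $I(+)M$ via a purely ``$M$-component'' witness $(0, ma^{m-1}x)$. This is exactly why the torsion-type condition $m(a^{m-1}M) = 0$ appears in the statement; once recognized, the rest of the argument is bookkeeping with the binomial-style formula for $(r,x)^k$.
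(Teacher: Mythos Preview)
Your proof is correct and follows essentially the same approach as the paper: both arguments rest on the identity $(r,x)^k = (r^k, kr^{k-1}x)$ in $R(+)M$ and the resulting translation of membership/vanishing conditions back to $R$ and $M$. Your treatment is in fact more explicit than the paper's (which compresses the $(2)\Rightarrow(1)$ case analysis into the single line ``then $r$ is not an $(m,n)$-unbreakable-zero element of $I$ by hypothesis''), but the logic is identical.
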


\begin{proof}
$(1) \Rightarrow (2)$  Let $J = I(+)M$. Assume that $0 \not = r^m \in I$ for $r \in R$.
Thus $(0, 0) \not = (r, 0)^m = (r^m, 0) \in J$.
Hence $(r, 0)^n = (r^n, 0) \in J$; so $r^n \in I$.
Thus $I$ is a weakly $(m, n)$-closed ideal of $R$.
Since $J$ is not $(m, n)$-closed, $J$, and hence
$I$, has an $(m, n)$-unbreakable-zero element;
so $I$ is not $(m, n)$-closed.
Let $a$ be an $(m, n)$-unbreakable-zero element of $I$ and $x \in M$.
Then $(a, x)^m = (a^m, m(a^{m-1}x)) \in J$.
Since $a^n \not \in I$, we have $(a, x)^m = (a^m, m(a^{m-1}x)) = (0, 0)$.
Thus $m(a^{m-1}M) = 0$.

$(2) \Rightarrow (1)$  Since $I$ is a weakly
$(m, n)$-closed ideal of $R$ that is not $(m, n)$-closed,
$I$ has an $(m, n)$-unbreakable-zero element $a$. Hence $(a, 0)$ is an
$(m, n)$-unbreakable-zero element of $J = I(+)M$.
Thus $J$ is not an $(m, n)$-closed ideal of $A$. Suppose that
$(0, 0) \not = (r, y)^m = (r^m, m(r^{m-1}y)) \in J$.
Then $r$ is not an $(m, n)$-unbreakable-zero element of $I$ by hypothesis.
Hence $(r^n, n(r^{n-1}y)) = (r, y)^n \in J$; so $J$ is a weakly
$(m, n)$-closed ideal of $A$ that is not $(m, n)$-closed.
\end{proof}

%\begin{thm} \label{add-3}
%Let $R$ be a commutative ring, $m$ and $n$ positive integers,
%$M$ an $R$-module, and $N$ an $(m, n)$-closed prime submodule of $M$.
%If $I$ is a weakly $(m, n)$-closed ideal of $R$ that is not $(m, n)$-closed and
%$m(a^{m-1}M) = 0$ for every $(m, n)$-unbreakable-zero element $a$ of $I$,
%then $I(+)N$ is a weakly $(m, n)$-closed ideal of $R(+)M$ that is not $(m, n)$-closed.
%\end{thm}

%\begin{proof}
%By Theorem~\ref{add-1}, $K = R(+)N$ is an (weakly) $(m, n)$-closed ideal of $R(+)M$.
%By Theorem~\ref{add-2}, $L = I(+)M$ is a weakly $(m, n)$-closed ideal of $R(+)M$.
%Thus $J = K \cap L = I(+)N$ is a weakly $(m, n)$-closed ideal of $R(+)M$ by Theorem~\ref{basic}(4).
%Let $r$ be an $(m, n)$-unbreakable-zero element of $I$.
%Then $(r, 0)$ is an $(m, n)$-unbreakable-zero element of $J$, and
%hence $I(+)N$ is a weakly $(m, n)$-closed ideal of $R(+)M$ that is not $(m, n)$-closed.
%\end{proof}

We end this section with another way to construct weakly $(m,n)$-closed ideals that are not $(m,n)$-closed.
See \cite [Theorems 3.1 and 3.8]{BARECENT} for similar results for $(m,n)$-closed ideals.

\begin{thm} \label{addNew}
Let $R$ be an integral domain and $I = p^kR$ a principal ideal of $R$,
where $p$ is a prime element of $R$ and $k$ a positive integer.
Let $m$ be a positive integer such that $m < k$, and write $k = mq + r$
for integers $q, r$, where $q\geq 1$ and  $0\leq r < m$.
Then $J = I/p^cR$ is a weakly $(m, n)$-closed ideal of $R/p^cR$
that is not $(m, n)$-closed for positive integers $n < m$ and $c \geq k + 1$
if and only if $r \not = 0$, $k + 1 \leq c \leq m(q+1)$, and $n(q+1) < k$.
\end{thm}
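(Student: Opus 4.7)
The plan is to reduce the entire problem to elementary inequalities in the $p$-adic valuation. Since $R$ is a domain and $p$ is prime, every nonzero $x\in R$ has a well-defined valuation $s=v_p(x)\ge 0$, and because $c>k$, for any positive integer $t$ one has $\overline{x}^t\in J$ iff $p^k\mid x^t$ iff $ts\ge k$, and $\overline{x}^t\neq 0$ in $R/p^cR$ iff $ts<c$. Thus the two properties translate to
\begin{align*}
J\text{ is }(m,n)\text{-closed} &\iff \forall s\ge 0\colon\ ms\ge k\Rightarrow ns\ge k,\\
J\text{ is weakly }(m,n)\text{-closed} &\iff \forall s\ge 0\colon\ (ms\ge k\text{ and }ms<c)\Rightarrow ns\ge k.
\end{align*}

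For the forward direction, assume $J$ is weakly $(m,n)$-closed but not $(m,n)$-closed, so some $s\ge 0$ satisfies $ms\ge k$ and $ns<k$, while any such $s$ must additionally satisfy $ms\ge c$ by the weakly condition. First, if $r=0$ then $s=q$ gives $ms=mq=k<c$ (using $c\ge k+1$) and $ns=nq<mq=k$ (using $n<m$), contradicting the weakly condition; hence $r\neq 0$. With $r\neq 0$, the smallest $s$ with $ms\ge k$ is $s=q+1$, and failure of $(m,n)$-closedness requires $n(q+1)<k$, for otherwise $ns\ge n(q+1)\ge k$ would hold for all $s\ge q+1$. Then $s=q+1$ is itself a witness, so the weakly condition forces $m(q+1)\ge c$, yielding $c\le m(q+1)$.

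For the converse, assume $r\neq 0$, $n(q+1)<k$, and $k+1\le c\le m(q+1)$. Taking $x=\overline{p^{q+1}}$, the inequality $m(q+1)\ge c$ gives $\overline{x}^m=\overline{p^{m(q+1)}}=0\in J$, while $n(q+1)<k<c$ shows $\overline{x}^n=\overline{p^{n(q+1)}}$ is nonzero and not in $J$; thus $J$ is not $(m,n)$-closed. To verify the weakly condition, suppose $0\neq\overline{x}^m\in J$ with $s=v_p(x)$; then $ms\ge k$ combined with $r\neq 0$ forces $s\ge q+1$, while $ms<c\le m(q+1)$ forces $s\le q$, a contradiction. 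Hence the premise is vacuous and $J$ is weakly $(m,n)$-closed.

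The main difficulty is bookkeeping rather than conceptual: one must split on $r=0$ versus $r\neq 0$, pinpoint the extremal witness $s=q+1$ in the nontrivial case, and confirm that the three arithmetic conditions are precisely what is needed. The essential observation is that the hypothesis $c>k$ collapses the membership test for $J$ in $R/p^cR$ to the single valuation inequality $ts\ge k$, which turns the whole equivalence into an exercise in comparing the thresholds $k/m$, $k/n$, and $c/m$.
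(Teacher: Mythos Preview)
Your proof is correct and follows essentially the same approach as the paper's: both reduce the problem to $p$-adic divisibility, identify $s=q+1$ as the extremal witness, and show that the weakly hypothesis becomes vacuous in the converse direction. One small caveat: your opening claim that every nonzero $x\in R$ has a well-defined $v_p(x)$ need not hold in an arbitrary domain, but since you only ever invoke $v_p$ for elements whose image in $R/p^cR$ is nonzero (where $p^c\nmid x$ forces $v_p(x)<c$ to exist), the argument is unaffected.
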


\begin{proof}
Suppose that $J$ is a weakly $(m, n)$-closed ideal of $R/p^cR$ that is not
$(m, n)$-closed for positive integers $n < m$ and $c \geq k + 1$.
It is clear that $r \not = 0$, for if $r = 0$, then $0 \not = (p^q)^m + p^cR \in J$,
but $(p^q)^n + p^cR \not \in  J$. Since $q + 1$ is the smallest positive integer
such that $(p^{(q+1)})^m + p^cR \in J$ and $J$ is not $(m, n)$ closed,
we have  $0 = (p^{(q+1)})^m + p^cR \in J$ and $(p^{(q+1)})^n + p^cR \not \in  J$.
Thus $n(q+1) < k$ and $ k + 1 \leq c \leq (q+1)m$.

Conversely, assume that $r \not = 0$, $k + 1 \leq c \leq m(q+1)$, and $n(q+1) < k$.
Let $x \in R/p^cR$ such that $x^m \in J$.
Then $x = p^iy + p^cR$ for some $y \in R$ such that $p^{(i + 1)} \nmid y$ in $R$.
Since $x^m = (p^i)^m + p^cR \in J$, we have
$i \geq q + 1$. Thus by hypothesis, $x^m = 0$ in $R/p^cR$.
Since $0 = (p^{(q+1)})^m + p^cR \in J$ and $n(q+1) < k$,
we have $(p^{(q+1)})^n + p^cR \not \in J$. Hence $J$ is not $(m, n)$-closed.
\end{proof}

\begin{exm}
$(a)$  Let $R = \mathbb{Z}$, $I = 2^{12}\mathbb{Z}$, and $J = I/2^{13}\mathbb{Z}$.
Then by Theorem~\ref{addNew}, $J$ is a weakly $(5, 3)$-closed ideal
of $\mathbb{Z}/2^{13}\mathbb{Z}$ that is not $(5, 3)$-closed.

$(b)$ Let $R$, $I$, and $J$ be as in part $(a)$ above.
Then $J(+)J$ is a weakly $(5,3)$-closed ideal of $\mathbb{Z}/2^{13}\mathbb{Z}(+)J$ that is not $(5,3)$-closed by Theorem~\ref{add-2}.
\end{exm}

\bigskip

\section{$(m,n)$-von Neumann regular rings} \label{sec3}

In this section, we introduce the concepts of $(m,n)$-von Neumann regular elements
and $(m,n)$-von Neumann regular rings
and use them to determine when every proper ideal of $R$ is $(m,n)$-closed or weakly $(m,n)$-closed.
We also define the related concepts of $n$-regular and $\omega$-regular commutative rings.
First, we handle the case for ideals contained in $Nil(R)$.

\begin{thm}\label {thm-nilideal}
Let $R$ be a commutative ring and $m$ and $n$ positive integers with $m  > n$.
Then every ideal of $R$ contained in $Nil(R)$ is weakly $(m,n)$-closed
if and only if $w^{m} = 0$ for every $w \in Nil(R)$.
\end{thm}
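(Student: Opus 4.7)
The plan is to attack this biconditional in two parts, with the backward direction being essentially vacuous and the forward direction requiring a short principal-ideal argument together with a unit-trick.

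For $(\Leftarrow)$, assume $w^m = 0$ for every $w \in Nil(R)$, and let $I \subseteq Nil(R)$ be any ideal. The key observation is that if $x \in R$ satisfies $x^m \in I$, then $x^m \in Nil(R)$, which forces $x$ itself to be nilpotent; by hypothesis this gives $x^m = 0$. Hence the premise ``$0 \neq x^m \in I$'' in the definition of weakly $(m,n)$-closed can never be met, and $I$ is weakly $(m,n)$-closed vacuously.

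For $(\Rightarrow)$, I would argue by contrapositive. Suppose some $w \in Nil(R)$ has $w^m \neq 0$, and exhibit an ideal contained in $Nil(R)$ that is not weakly $(m,n)$-closed. The natural choice is the principal ideal $I = Rw^m$, which lies in $Nil(R)$ since $w^m$ is nilpotent. By assumption $I$ is weakly $(m,n)$-closed, and since $0 \neq w^m \in I$, we get $w^n \in I$; that is, $w^n = rw^m$ for some $r \in R$. Rearranging yields $w^n\bigl(1 - rw^{m-n}\bigr) = 0$, and because $m > n$ the element $rw^{m-n}$ is nilpotent, so $1 - rw^{m-n}$ is a unit. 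Therefore $w^n = 0$, and since $m > n$ this forces $w^m = 0$, contradicting the standing assumption $w^m \neq 0$.

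The main (modest) obstacle is spotting that $Rw^m$ is exactly the right ideal to build, and that the hypothesis $m > n$ is used twice: once to guarantee that the cancelling factor $1 - rw^{m-n}$ is a unit, and once to pass from the conclusion $w^n = 0$ back to the desired $w^m = 0$. Beyond those observations, everything reduces to direct unpacking of the definitions.
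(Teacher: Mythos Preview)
Your proof is correct and follows essentially the same route as the paper: the backward direction is vacuous, and the forward direction uses the principal ideal $w^mR$ together with the unit trick $1 - rw^{m-n} \in U(R)$ to force $w^n = 0$. One minor presentational wrinkle: you announce a contrapositive argument but then write ``By assumption $I$ is weakly $(m,n)$-closed,'' which reads as a contradiction argument (exactly as the paper does it); either framing works, but you should pick one and stick with it.
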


\begin{proof}
Suppose that every ideal of $R$ contained in $Nil(R)$ is weakly $(m,n)$-closed,
but $w^m \neq 0$ for some $w \in Nil(R)$. Let $J = w^mR \subseteq Nil(R)$.
Then $J$ is weakly $(m,n)$-closed and $0 \neq w^m \in J$; so $w^n \in J$ and $w^n \neq 0$ since $n < m$.
Thus $ w^n = w^ma$ for some $a \in R$, and hence $ w^n(1 - w^{m-n}a) = 0$.
Then $1 - w^{m-n}a \in U(R)$ since $w^{m-n}a \in Nil(R))$; so $w^n = 0$, a contradiction.
Thus $w^m = 0$ for every $w \in Nil(R)$.

Conversely, suppose that $w^m = 0$ for every $ w \in Nil(R)$.
Then every ideal of $R$ contained in $Nil(R)$ is weakly $(m,n)$-closed by definition.
\end{proof}

Recall that $x \in R$ is a {\it von Neumann regular element} of $R$
 if $x^{2}r = x$ for some $r \in R$.
Similarly, $x \in R$ is a {\it $\pi$-regular element} of $R$
if $x^{2n}r = x^n$ for some $r \in R$ and positive integer $n$.
Thus $R$ is a von Neumann regular ring  (resp., $\pi$-regular ring) if and only if
every element of $R$ is von Neumann regular (resp., $\pi$-regular).
It is well known that $R$ is $\pi$-regular (resp., von Neumann regular)
if and only if dim$(R) = 0$ (resp., $R$ is reduced and dim$(R) = 0$) \cite[Theorem 3.1, p. 10]{HJ}.
A ring $R$ is a {\it strongly $\pi$-regular ring}
if there is a positive integer $n$ such that for every $x \in R$, we have $x^{2n}r = x^n$ for some $r \in R$.
For a recent article on von Neumann regular and related elements of a commutative ring, see \cite{DAB}.
These concepts are generalized in the next definition.

\begin{defn}
Let $R$ be a commutative ring and $m$ and $n$ positive integers.
Then $x \in R$ is an {\it  $(m,n)$-von Neumann regular} element of $R$
(or {\it $(m,n)$-vnr} for short) if $x^{m}r = x^{n}$ for some
$r \in R$. If every element of $R$ is $(m, n)$-vnr,
then $R$ is an {\it $(m,n)$-von Neumann regular} ring.
\end{defn}

Thus a commutative ring $R$ is von Neumann regular ring if and only if it is $(2,1)$-von Neumann regular, and
$R$ is strongly $\pi$-regular if and only if it is
$(2n,n)$-von Neumann regular for some positive integer $n$.
The next theorem gives some basic facts about $(m,n)$-vnr elements.

\begin{thm} \label{vnrfacts}
Let $R$ be a commutative ring, $x \in R$, and $m$ and $n$ positive integers.
  \begin{enumerate}
    \item  $x$ is $(m,n)$-vnr for $m \leq n$ (so we usually assume that $m > n$).
		\item  If $x$ is $(m,n)$-vnr, then $x$ is $(m',n')$-vnr for all positive integers $m' \leq m$ and $n' \geq n$.
		\item  If $x \in U(R)$ or $x = 0$, then $x$ is $(m,n)$-vnr for all positive integers $m$ and $n$.
		\item  If $x \in R \setminus (Z(R) \cup U(R))$, then $x$ is $(m,n)$-vnr if and only if  $m \leq n$.
		\item  If $x^n = 0$, then $x$ is $(m,n)$-vnr for every positive integer $m$.
		\item  If $x^k = 0$ and $x^{k-1} \neq 0$ for an integer $k \geq 2$, then $x$ is $(m,n)$-vnr
		if and only if $m \leq n$ or $n \geq k$.
		\item  If $x$ is $(m,n)$-vnr with $m > n$, then $x$ is $(m + 1, n)$-vnr.
		Moreover, in this case, $x$  is $(m',n')$-vnr for all positive integers $m'$ and $n' \geq n$.
	  Thus $R$ is von Neumann regular if and only if
		$R$ is $(m,n)$-von Neumann regular for all positive integers $m$ and $n$.
	\end{enumerate}
\end{thm}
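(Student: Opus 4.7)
The plan is to verify each of the seven items largely by direct manipulation of the defining equation $x^m r = x^n$, with parts (6) and (7) requiring slightly more care.

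For parts (1)--(3), the approach is simply to produce the witness $r$ explicitly. For (1), take $r = x^{n-m}$ when $m \leq n$. For (2), starting from $x^m r = x^n$, one multiplies by $x^{m-m'}$ to lower the left exponent to $m'$, and by $x^{n'-n}$ to raise the right exponent to $n'$, which handles both weakenings simultaneously. For (3), a unit $x$ admits $r = x^{n-m}$ (using $x^{-1} \in R$), and $x = 0$ admits $r = 0$.

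For (4), the nontrivial direction is the forward one. If $x^m r = x^n$ with $m > n$, I would rewrite the relation as $x^n(x^{m-n} r - 1) = 0$; since $x \notin Z(R)$, so is $x^n$, so one can cancel to get $x^{m-n} r = 1$, which makes $x$ a unit and contradicts $x \notin U(R)$. Hence $m \leq n$, and (1) gives the converse. Part (5) is immediate with $r = 0$. For (6), I first note that $x^k = 0$ together with $x^{k-1} \neq 0$ force $x^n \neq 0$ for every $n < k$. The ``if'' direction then follows from (1) when $m \leq n$ and from (5) when $n \geq k$ (so $x^n = 0$). For ``only if'', suppose $(m,n)$-vnr with $m > n$ and $n < k$; multiplying $x^m r = x^n$ by the nonnegative power $x^{k-1-n}$ yields $x^{m+k-1-n} r = x^{k-1}$, and $m \geq n+1$ forces the left exponent to be at least $k$, so the left side vanishes and $x^{k-1} = 0$, a contradiction.

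Part (7) is the main obstacle. The key trick is to multiply $x^m r = x^n$ on both sides by $x^{m-n} r$, yielding
\[
x^{2m-n} r^2 = (x^m r)(x^{m-n} r) = x^n \cdot x^{m-n} r = x^m r = x^n,
\]
so $x$ is $(2m-n, n)$-vnr. Since $m > n$ gives $2m - n \geq m+1$, part (2) then produces $(m+1, n)$-vnr. Iterating the same trick by induction on $k$ yields $x^{m+k(m-n)} r^{k+1} = x^n$ for every $k \geq 0$, so for any prescribed $m' \geq 1$ I choose $k$ large enough that $m + k(m-n) \geq m'$ and apply part (2) to obtain $(m', n')$-vnr for any $n' \geq n$. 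The final equivalence for von Neumann regularity is immediate: one direction specializes to $(m,n) = (2,1)$, and the other applies the moreover statement with $(m,n) = (2,1)$, which satisfies $m > n$.
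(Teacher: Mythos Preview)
Your proof is correct and follows essentially the same approach as the paper's, which also dispatches (1)--(3) and (5) as clear, argues (4) and (6) by the same cancellation/multiplication tricks, and proves (7) via the identity $x^n = (x^m r)(x^{m-n} r) = x^{2m-n} r^2$. The only cosmetic difference is that in (7) the paper absorbs one more factor to write $x^n = x^{m+1}(x^{m-n-1} r^2)$ directly, whereas you land on $(2m-n,n)$-vnr and invoke (2) to reach $(m+1,n)$; both are the same computation.
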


\begin{proof}
The proofs of $(1) - (3)$ and $(5)$ are clear.

$(4)$  By $(1)$, $x$ is $(m,n)$-vnr for $m \leq n$. If $m > n$, then $x^mr = x^n$ for $r \in R$
implies $x^{m-n}r = 1$. Thus $x \in U(R)$, a contradiction.

$(6)$  Suppose that $x^mr = x^n$ for $r \in R$, but $m > n$ and $n < k$.
Then $x^{k-1} = x^n(x^{k-n-1}) = (x^mr)(x^{k-n-1}) = x^k(x^{m-n-1}r) = 0$,
a contradiction. Thus $m \leq n$ or $n \geq k$. The converse is clear.

$(7)$ Let $x$ be $(m,n)$-vnr with $m > n$. Then $x^mr = x^n$ for $r \in R$ implies $x^n = x^mr = x^n(x^{m-n}r) =
 (x^mr)(x^{m-n}r) = x^{m + 1}(x^{m-n-1}r^2)$ with $x^{m-n-1}r^2 \in R$. Thus $x$ is $(m+1,n)$-vnr.
The ``moreover'' statement follows by induction and $(2)$.
\end{proof}

\begin{cor} \label{strongcor}
Let $R$ be a commutative ring and $m$ and $n$ positive integers with $m > n$.
Then $R$ is $(m,n)$-von Neumann regular if and only if $R$ is
$(m',n')$-von Neumann regular for all positive integers $m'$ and $n' \geq n$.
In particular, if $R$ is $(m,n)$-von Neumann regular, then $R$ is strongly $\pi$-regular, and thus dim$(R) = 0$.
\end {cor}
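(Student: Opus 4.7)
The proof is essentially a packaging of Theorem~\ref{vnrfacts}(7), so the plan is short. First I would handle the nontrivial direction of the equivalence: assume $R$ is $(m,n)$-von Neumann regular with $m>n$, fix positive integers $m'$ and $n' \geq n$, and show every $x \in R$ is $(m',n')$-vnr. Pick $x \in R$; by hypothesis, $x$ is $(m,n)$-vnr, so the ``moreover'' clause of Theorem~\ref{vnrfacts}(7) applies and gives $x$ is $(m',n')$-vnr for every $m'$ and every $n' \geq n$. This is exactly the conclusion.

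For the reverse direction of the equivalence, just specialize $m' = m$ and $n' = n$, which is permitted since $n' = n \geq n$; this recovers that $R$ is $(m,n)$-vnr. The only subtle point worth noting is that Theorem~\ref{vnrfacts}(7) requires the hypothesis $m>n$ for the jump from $(m,n)$-vnr to $(m+1,n)$-vnr; this is why the assumption $m>n$ is explicit in the statement.

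For the ``in particular'' portion, apply the equivalence with $m' = 2n$ and $n' = n$: since $n \geq n$, $R$ is $(2n,n)$-von Neumann regular, which by the definition preceding Theorem~\ref{vnrfacts} is exactly the statement that $R$ is strongly $\pi$-regular. The conclusion $\dim(R) = 0$ then follows from the cited classical fact that any $\pi$-regular (hence strongly $\pi$-regular) commutative ring is zero-dimensional \cite[Theorem 3.1, p.~10]{HJ}.

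I do not expect any genuine obstacle here; the only thing to be careful about is invoking Theorem~\ref{vnrfacts}(7) with its full ``moreover'' strength rather than just the single-step statement, since we need $(m',n')$-vnr simultaneously for \emph{all} $m'$ and all $n' \geq n$, not merely for $m' = m+1$.
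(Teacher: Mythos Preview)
Your proposal is correct and matches the paper's approach: the paper states this as a corollary with no separate proof, leaving it as an immediate consequence of Theorem~\ref{vnrfacts}(7), and your write-up simply unpacks that dependence explicitly. The only addition you make beyond what the paper implicitly does is spelling out the strongly $\pi$-regular step via $(2n,n)$-von Neumann regularity and the citation to \cite{HJ} for $\dim(R)=0$, which is harmless and accurate.
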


We next determine when every proper ideal of $R$ is weakly $(m,n)$-closed.

\begin{thm}\label{add3}
  Let $R$ be a commutative ring and $m$ and $n$ positive integers with $m > n$.
	Then the following statements are equivalent.
  \begin{enumerate}
    \item Every proper ideal of $R$ is weakly $(m,n)$-closed.
    \item Every non-nilpotent element of $R$ is $(m,n)$-vnr
		and $w^m = 0$ for every $w \in Nil(R)$.
  \end{enumerate}
\end{thm}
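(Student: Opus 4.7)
The plan is to prove the two directions separately, each by constructing or identifying an appropriate principal ideal and applying the weakly $(m,n)$-closed hypothesis (or the $(m,n)$-vnr hypothesis) to it. The nilpotent part of the statement is already handled by Theorem~\ref{thm-nilideal}, so that piece comes for free.

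For $(1) \Rightarrow (2)$, first observe that every ideal contained in $Nil(R)$ is in particular a proper ideal of $R$, so by hypothesis it is weakly $(m,n)$-closed; Theorem~\ref{thm-nilideal} then immediately yields $w^m = 0$ for every $w \in Nil(R)$. Now let $x \in R$ be non-nilpotent. I would consider the principal ideal $I = x^m R$. If $I = R$, then $x^m$ (hence $x$) is a unit, and Theorem~\ref{vnrfacts}(3) says $x$ is $(m,n)$-vnr. Otherwise $I$ is a proper ideal; since $x$ is not nilpotent we have $x^m \ne 0$, so $0 \ne x^m \in I$, and the weakly $(m,n)$-closed hypothesis gives $x^n \in I = x^mR$, i.e.\ $x^n = x^m r$ for some $r \in R$, which is exactly the $(m,n)$-vnr condition.

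For $(2) \Rightarrow (1)$, let $I$ be a proper ideal and suppose $0 \ne x^m \in I$ for some $x \in R$. The key observation is that $x$ must be non-nilpotent: if $x$ were nilpotent, the hypothesis $w^m = 0$ for every $w \in Nil(R)$ would force $x^m = 0$, contradicting $x^m \ne 0$. So $x$ is non-nilpotent, and by hypothesis it is $(m,n)$-vnr, giving $r \in R$ with $x^n = x^m r$. Since $x^m \in I$, we conclude $x^n \in I$, so $I$ is weakly $(m,n)$-closed.

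There is no real obstacle here; the argument is essentially a translation between the defining conditions, with the only subtle point being the need to rule out the nilpotent case in $(2) \Rightarrow (1)$ using the auxiliary hypothesis $w^m = 0$ on $Nil(R)$. This same hypothesis is what Theorem~\ref{thm-nilideal} forces on us in $(1) \Rightarrow (2)$, which explains why it appears symmetrically in both directions of the equivalence.
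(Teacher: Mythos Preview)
Your proof is correct and essentially identical to the paper's: both directions use the principal ideal $x^mR$ together with Theorem~\ref{thm-nilideal} for the nilpotent part, and your case split on whether $x^mR = R$ is the same as the paper's split on whether $x \in U(R)$. No gaps or differences worth noting.
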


\begin{proof}
$(1) \Rightarrow (2)$
Since every ideal of $R$ contained in $Nil(R)$ is weakly $(m,n)$-closed,
$w^{m} = 0$ for every  $w \in Nil(R)$ by Theorem \ref{thm-nilideal}.
Let $x \in R \setminus Nil(R)$. If $x \in U(R)$, then $x$ is $(m,n)$-vnr by Theorem~\ref{vnrfacts}(3).
If $x \notin U(R)$, then $I = x^mR$ is weakly $(m,n)$-closed
and $0 \neq x^{m} \in I$; so $x^{n} \in I$. Thus $ x^{n} = x^{m}r$ for some $r \in R$,
and hence $x$ is $(m,n)$-vnr.

$(2) \Rightarrow (1)$  Let $I$ be a proper ideal of $R$ and $0 \not = x^m \in I$ for
$x \in R$. Then $x \notin Nil(R)$; so $x$ is $(m,n)$-vnr.
Thus $x^mr = x^n$ for some $r \in R$;
so $x^n = x^mr \in I$. Hence $I$ is weakly $(m,n)$-closed.
\end{proof}

In view of Theorem \ref{add3}, we have the following result.

\begin{cor}  \label{vnrcor}
  Let $R$ be a reduced commutative ring and $m$ and $n$ positive integers.
	Then the following statements are equivalent.
  \begin{enumerate}
    \item Every proper ideal of $R$ is weakly $(m,n)$-closed.
    \item Every proper ideal of $R$ is $(m,n)$-closed.
    \item $R$ is $(m,n)$-von Neumann regular.
  \end{enumerate}
\end{cor}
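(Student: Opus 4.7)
The plan is to derive this as a clean corollary of Theorem~\ref{add3}, using reducedness to eliminate the nilpotent side condition. First, I would dispose of the trivial range $m \leq n$: in that case, every proper ideal is weakly $(m,n)$-closed and $(m,n)$-closed by Theorem~\ref{basic} (together with the introductory observation), and every element of $R$ is $(m,n)$-vnr by Theorem~\ref{vnrfacts}(1); so all three statements hold automatically. Thus I may and will assume $m > n$.

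For the implication $(1) \Rightarrow (3)$, apply Theorem~\ref{add3}. Since $R$ is reduced, $Nil(R) = \{0\}$, and the condition ``$w^m = 0$ for every $w \in Nil(R)$'' is vacuous. Theorem~\ref{add3} then tells us that every non-nilpotent (hence every nonzero) element of $R$ is $(m,n)$-vnr, and $0$ is $(m,n)$-vnr by Theorem~\ref{vnrfacts}(3); so $R$ is $(m,n)$-von Neumann regular.

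For $(3) \Rightarrow (2)$, let $I$ be a proper ideal of $R$ and suppose $x^m \in I$ for some $x \in R$. Since $R$ is $(m,n)$-von Neumann regular, there exists $r \in R$ with $x^n = x^m r$, and thus $x^n \in I$; hence $I$ is $(m,n)$-closed. Finally, $(2) \Rightarrow (1)$ is immediate, since every $(m,n)$-closed ideal is weakly $(m,n)$-closed by definition.

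There is no serious obstacle here: the content of the corollary is already packaged in Theorem~\ref{add3} and Theorem~\ref{vnrfacts}, and the role of the reducedness hypothesis is simply to make the nilpotent condition in Theorem~\ref{add3} vacuous, thereby collapsing the distinction between weakly $(m,n)$-closed and $(m,n)$-closed.
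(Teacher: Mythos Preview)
Your proof is correct and follows essentially the same approach the paper intends: the paper simply writes ``In view of Theorem~\ref{add3}, we have the following result'' and gives no further argument, relying (as you do) on reducedness to make the nilpotent condition vacuous and on the introductory remark that weakly $(m,n)$-closed and $(m,n)$-closed ideals coincide in reduced rings. Your explicit handling of the trivial range $m \leq n$ and the direct verification of $(3) \Rightarrow (2)$ are helpful additions the paper leaves to the reader.
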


The following result is the analog of Theorem~\ref{add3} for $(m, n)$-closed ideals.

\begin{thm} \label{add4}
  Let $R$ be a commutative ring and $m$ and $n$ positive integers.
	Then the following statements are equivalent.
  \begin{enumerate}
    \item Every proper ideal of $R$ is $(m,n)$-closed.
    \item $R$ is $(m,n)$-von Neumann regular.
  \end{enumerate}
\end{thm}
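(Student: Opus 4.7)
The plan is to adapt the argument of Theorem~\ref{add3} to this simpler setting: since no ``$0 \neq x^m$'' hypothesis appears in the definition of $(m,n)$-closed, the subtleties involving $Nil(R)$ from Theorem~\ref{add3} disappear entirely. The proof reduces to a clean test-ideal argument using the principal ideal $x^m R$.

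For $(2)\Rightarrow(1)$, I would take an arbitrary proper ideal $I$ of $R$ and suppose $x^m \in I$ for some $x \in R$. Using $(m,n)$-von Neumann regularity, pick $r \in R$ with $x^m r = x^n$. Then $x^n = x^m r \in I$, so $I$ is $(m,n)$-closed. This is immediate.

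For $(1)\Rightarrow(2)$, given $x \in R$ I would set $I = x^m R$ and split on whether $I$ is proper. If $I$ is proper, then by hypothesis $I$ is $(m,n)$-closed, and since $x^m \in I$ we get $x^n \in I = x^m R$, producing the required $r \in R$ with $x^n = x^m r$. If $I = R$, then $x^m$ is a unit, so $x^m s = 1$ for some $s \in R$, and $r := sx^n$ satisfies $x^m r = x^n$; equivalently, by Theorem~\ref{vnrfacts}(3), every unit is $(m,n)$-vnr, and $x$ is a unit in this case.

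There is no real obstacle here: the only point that requires a moment's care is the degenerate case $I = R$ in $(1)\Rightarrow(2)$, which I handle by observing that $x^m$ being a unit forces $x$ to be a unit and invoking Theorem~\ref{vnrfacts}(3). Everything else is a direct translation between the definition of $(m,n)$-closed and the equation $x^m r = x^n$ via the principal ideal $x^m R$.
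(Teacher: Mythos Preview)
Your proposal is correct and follows essentially the same argument as the paper: both directions are handled identically, and in $(1)\Rightarrow(2)$ the paper splits on whether $x \in U(R)$ while you split on whether $x^mR = R$, which is the same case distinction since $x^mR = R$ if and only if $x$ is a unit.
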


\begin{proof}
$(1) \Rightarrow (2)$ Let $x \in R$.
If $x \in U(R)$, then $x$ is $(m,n)$-vnr by Theorem~\ref{vnrfacts}(3).
If $x \notin U(R)$,
then  $I = x^{m}R$ is $(m,n)$-closed and $x^{m} \in I$.
Thus $x^{n} \in I$;
so  $x^n = x^mr$ for some $r \in R$. Hence $x$ is $(m,n)$-vnr,
and thus $R$ is $(m,n)$-von Neumann regular.

$(2) \Rightarrow (1)$  Let $I$ be a proper ideal of $R$
and $x^m \in I$ for  $x \in R$.
Since $x$ is $(m, n)$-vnr, $x^mr = x^n$ for some $r \in R$.
Thus  $x^n = x^mr \in I$; so $I$ is $(m,n)$-closed.
\end{proof}

Of course, we are mainly interested in the case when $m > n$.
The next theorem incorporates Theorem~\ref{add4} with another characterization (\cite[Theorem 2.14]{BARECENT})
of when every proper ideal is $(m,n)$-closed. Note that in Theorem~\ref{add5}(3) below, there are no conditions on $m$ other than $m > n$.

\begin{thm} \label{add5}
 Let $R$ be a commutative ring and $m$ and $n$ positive integers with $m > n$.
	Then the following statements are equivalent.
	\begin{enumerate}
    \item Every proper ideal of $R$ is $(m,n)$-closed.
    \item $R$ is $(m,n)$-von Neumann regular.
		\item dim$(R) = 0$ and $w^n = 0$ for every $w \in Nil(R)$.
  \end{enumerate}
\end{thm}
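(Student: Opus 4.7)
My plan is to leverage Theorem~\ref{add4}, which already records the equivalence $(1)\Leftrightarrow(2)$, so that the real work lies in proving $(2)\Leftrightarrow(3)$.

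For $(2)\Rightarrow(3)$, the dimension statement $\dim(R)=0$ will come for free from Corollary~\ref{strongcor}, since $(m,n)$-von Neumann regularity forces $R$ to be strongly $\pi$-regular. To get $w^{n}=0$ for $w\in Nil(R)$, I would apply the $(m,n)$-vnr condition to $w$ itself: writing $w^{n}=w^{m}r$ and rearranging to $w^{n}(1-w^{m-n}r)=0$. Since $w^{m-n}r\in Nil(R)$, the element $1-w^{m-n}r$ lies in $U(R)$, which kills $w^{n}$. This is essentially the same trick used in the proof of Theorem~\ref{thm-nilideal}.

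For $(3)\Rightarrow(2)$, I would start from the fact (recalled before Corollary~\ref{strongcor}) that $\dim(R)=0$ implies $R$ is $\pi$-regular. Fix $x\in R$ and choose an integer $k\geq 1$ together with $r\in R$ satisfying $x^{2k}r=x^{k}$. Set $e=x^{k}r$; a direct check gives $e^{2}=x^{2k}r^{2}=x^{k}r=e$, so $e$ is idempotent, and then $(x(1-e))^{k}=x^{k}(1-e)^{k}=x^{k}(1-e)=x^{k}-x^{2k}r=0$. Thus $x(1-e)\in Nil(R)$, and the hypothesis $w^{n}=0$ for all $w\in Nil(R)$ yields $(x(1-e))^{n}=x^{n}(1-e)=0$. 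Therefore $x^{n}=x^{n}e=x^{n+k}r$. A straightforward induction on $j$ then upgrades this to $x^{n}=x^{n+jk}r^{j}$ for every $j\geq 0$, and picking any $j$ with $n+jk\geq m$ gives $x^{n}=x^{m}\cdot(x^{n+jk-m}r^{j})$, so $x$ is $(m,n)$-vnr.

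The main obstacle I anticipate lies in $(3)\Rightarrow(2)$: $\pi$-regularity only supplies an exponent $k$ depending on $x$, whereas $(m,n)$-vnr demands the fixed exponents $m$ and $n$. The idempotent-nilpotent splitting $x=xe+x(1-e)$ combined with the uniform annihilation $w^{n}=0$ supplies the bootstrap from the variable $k$ to the prescribed $n$, after which repeated substitution lifts the left-hand exponent from $n+k$ past the required $m$. Note that the argument uses $m>n$ only to guarantee that $w^{m-n}r$ and $x^{n+jk-m}$ make sense with nonnegative exponents, so no further assumption on $m$ is needed, which is the point highlighted after the statement.
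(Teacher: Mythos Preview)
Your argument is correct. The equivalence $(1)\Leftrightarrow(2)$ via Theorem~\ref{add4} matches the paper exactly, and your direct proof of $(2)\Leftrightarrow(3)$ is sound: the unit trick for $(2)\Rightarrow(3)$ is indeed the same maneuver as in Theorem~\ref{thm-nilideal}, and for $(3)\Rightarrow(2)$ your idempotent $e=x^{k}r$ correctly splits $x$ into a part with $x^{n}=x^{n}e$ and a nilpotent part $x(1-e)$ killed at exponent $n$, after which the substitution $x^{n}=x^{n+k}r\Rightarrow x^{n}=x^{n+jk}r^{j}$ pushes the exponent past $m$.

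The paper, however, does not argue $(2)\Leftrightarrow(3)$ at all: it simply invokes \cite[Theorem 2.14]{BARECENT} for the equivalence $(1)\Leftrightarrow(3)$ and is done in two lines. Your route is genuinely different in that it is self-contained within the present paper---relying only on Corollary~\ref{strongcor} and the classical $\pi$-regular characterization of zero-dimensional rings---and it makes visible the structural reason the equivalence holds (the idempotent--nilpotent decomposition available in any $\pi$-regular ring). The paper's approach is shorter but opaque, deferring the content to an external reference; yours actually explains why the condition on $Nil(R)$ in $(3)$ involves the exponent $n$ rather than $m$, which is precisely the point the paper flags in the sentence following the theorem.
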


\begin{proof}
$(1) \Leftrightarrow (2)$ is Theorem~\ref{add4} and
$(1) \Leftrightarrow (3)$ is \cite[Theorem 2.14]{BARECENT}.
\end{proof}

Theorem~\ref{add5} gives a nice ring-theoretric characterization of $(m,n)$-von Neumann regular rings (for $m > n$).
This can now be used to give a characterization of strongly $\pi$-regular commutative rings
which strengthens Corollary~\ref{strongcor}.

\begin{thm}
 Let $R$ be a commutative ring. Then the following statements are equivalent.
\begin{enumerate}
\item  $R$ is strongly $\pi$-regular.
\item  There are positive integers $m$ and $n$ with $m > n$ such that $R$ is $(m,n)$-von Neumann regular.
\item  There is a positive integer $n$ such that $R$ is $(m,n)$-von Neumann regular for every positive integer $m$.
\item  dim$(R) = 0$ and there is a positive integer $n$ such that $w^n = 0$ for every $w \in Nil(R)$.
\end{enumerate}
\end{thm}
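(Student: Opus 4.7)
The plan is to observe that the essential work has already been done in Corollary~\ref{strongcor} and Theorem~\ref{add5}, so that this theorem is a straightforward repackaging of those results in the language of strongly $\pi$-regular rings. Unwinding the definition, condition (1) says exactly that $R$ is $(2n,n)$-von Neumann regular for some positive integer $n$. Since $2n > n$, this immediately gives $(1) \Rightarrow (2)$. Conversely, if (2) holds with witnessing pair $(m,n)$ satisfying $m > n$, then Corollary~\ref{strongcor} promotes $(m,n)$-vnr to $(m',n')$-vnr for all positive integers $m'$ and all $n' \geq n$; specializing to $(m',n') = (2n,n)$ recovers (1).

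The equivalence $(2) \Leftrightarrow (3)$ is again immediate from Corollary~\ref{strongcor}: the implication $(3) \Rightarrow (2)$ is trivial (take $m = n+1$), while $(2) \Rightarrow (3)$ is precisely the content of Corollary~\ref{strongcor}, which says that the same $n$ serves as a witness for every $m' > n$ (and trivially for $m' \leq n$ by Theorem~\ref{vnrfacts}(1)). The equivalence $(2) \Leftrightarrow (4)$ is nothing but the equivalence of conditions (2) and (3) of Theorem~\ref{add5}, applied in each direction (in the direction $(4) \Rightarrow (2)$ one chooses any $m > n$, for instance $m = n+1$, and invokes Theorem~\ref{add5}).

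The only point that requires any care is bookkeeping: one must check that the single integer $n$ threads consistently through the definition of strongly $\pi$-regular, the $(m,n)$-vnr hypothesis, and the nilpotency bound $w^n = 0$. Corollary~\ref{strongcor} is precisely the tool that guarantees this propagation, so there is no genuine obstacle to the proof.
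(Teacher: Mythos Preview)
Your proof is correct and follows essentially the same approach as the paper: both invoke Corollary~\ref{strongcor} for the equivalence of (1), (2), and (3), and both cite the equivalence $(2)\Leftrightarrow(3)$ of Theorem~\ref{add5} for $(2)\Leftrightarrow(4)$. The only cosmetic difference is that the paper arranges the first three conditions as a cycle $(1)\Rightarrow(2)\Rightarrow(3)\Rightarrow(1)$ rather than as pairwise equivalences, but the content is identical.
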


\begin{proof}
$(1) \Rightarrow (2)$ A strongly $\pi$-regular ring is $(2n,n)$-von Neuman regular for some positive integer $n$.

$(2) \Rightarrow (3)$ This follows from Corollary~\ref{strongcor}.

$(3) \Rightarrow (1)$ In particular, $R$ is $(2n,n)$-von Neumann regular, and thus strongly $\pi$-regular.

$(2) \Leftrightarrow (4)$ This is just $(2) \Leftrightarrow (3)$ of Theorem~\ref{add5}.
\end{proof}

We next investigate in more detail the pairs $(m,n)$ for which a commutative ring $R$
or an $x \in R$ is $(m,n)$-von Neumann regular.

\begin{defn}
Let $R$ be a commutative ring, $x \in R$, and $k$ a positive integer.
\begin{enumerate}
\item  $\mathcal{V}(R,x) = \{ \, (m,n) \in \mathbb{N} \times \mathbb{N} \mid x$ is $(m,n)$-vnr $\}$.
\item  $\mathcal{V}(R) = \{\, (m,n) \in \mathbb{N} \times \mathbb{N} \mid R$ is $(m,n)$-von Neumann regular $\}$.
\item  $\mathcal{B}_k = \{ \, (m,n) \in \mathbb{N} \times \mathbb{N} \mid m \leq n$ or $n \geq k \, \}$.
\item  $\mathcal{B}_{\omega} = \{ \,(m,n) \in \mathbb{N} \times \mathbb{N} \mid m \leq n \, \}$.
\end{enumerate}
\end{defn}

Then $\mathcal{V}(R) = \bigcap_{x \in R} \mathcal{V}(R,x)$ and

$$\mathbb{N} \times\mathbb{N} = \mathcal{B}_1 \supsetneq \mathcal{B}_2 \supsetneq \cdots \supsetneq \mathcal{B}_{\omega}.$$

\begin{thm}
Let $R$ be a commutative ring and $x \in R$.
\begin{enumerate}
\item  $\mathcal{V}(R,x) = \mathcal{B}_k$, where $k$ is the smallest positive integer
such that $(i,k) \in \mathcal{V}(R,x)$ for some $i > k$.
(Thus $k$ is the smallest positive integer such that $x$ is $(m,k)$-vnr for every positive integer $m$.)
If no such $k$ exists, then  $\mathcal{V}(R,x) = \mathcal{B}_{\omega}$.
\item $\mathcal{V}(R) = \mathcal{B}_k$, where $k$ is the smallest positive integer
such that $(i,k) \in \mathcal{V}(R,x)$ for some $i > k$ and every $x \in R$.
(Thus $k$ is the smallest positive integer such that $x$ is $(m,k)$-vnr for every $x \in R$ and positive integer $m$.)
If no such $k$ exists, then  $\mathcal{V}(R) = \mathcal{B}_{\omega}$.
\end{enumerate}
\end{thm}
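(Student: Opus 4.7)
The plan is to establish part (1) by a direct two-case analysis, then obtain part (2) from (1) together with the identity $\mathcal{V}(R) = \bigcap_{x \in R} \mathcal{V}(R,x)$. The two inputs that do the real work are both from Theorem~\ref{vnrfacts}: clause (1), which gives $\mathcal{B}_\omega \subseteq \mathcal{V}(R,x)$ for every $x$, and clause (7), which says that once $x$ is $(i,k)$-vnr with $i > k$, then $x$ is $(m',n')$-vnr for every positive integer $m'$ and every $n' \geq k$.

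For part (1), I would split on whether there is any positive integer $k$ with some $i > k$ such that $(i,k) \in \mathcal{V}(R,x)$. If there is none, then by definition no pair $(m,n)$ with $m > n$ lies in $\mathcal{V}(R,x)$, so $\mathcal{V}(R,x) \subseteq \{(m,n) : m \leq n\} = \mathcal{B}_\omega$, and Theorem~\ref{vnrfacts}(1) supplies the reverse inclusion. Otherwise, let $k$ be the smallest such integer and apply Theorem~\ref{vnrfacts}(7) to the witness $(i,k)$: this gives $(m,n) \in \mathcal{V}(R,x)$ whenever $n \geq k$, and Theorem~\ref{vnrfacts}(1) handles the case $m \leq n$, so $\mathcal{B}_k \subseteq \mathcal{V}(R,x)$. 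Conversely, any $(m,n) \in \mathcal{V}(R,x)$ with $m > n$ must satisfy $n \geq k$ by minimality of $k$, so $(m,n) \in \mathcal{B}_k$.

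For part (2), I would use part (1) to write $\mathcal{V}(R,x) = \mathcal{B}_{k_x}$ for each $x \in R$, where $k_x \in \mathbb{N}$ or $k_x = \omega$ (the latter corresponding to $\mathcal{V}(R,x) = \mathcal{B}_\omega$). A short direct check shows that a pair $(m,n)$ with $m > n$ lies in $\bigcap_x \mathcal{B}_{k_x}$ exactly when $n \geq k_x$ for every $x$, i.e.\ when $n \geq \sup_x k_x$. Hence $\mathcal{V}(R) = \mathcal{B}_K$ if $K := \sup_x k_x$ is finite, and $\mathcal{V}(R) = \mathcal{B}_\omega$ otherwise. To close out, I match $K$ with the $k$ from the statement: by part (1), for a fixed $x$ there exists $i > k$ with $(i,k) \in \mathcal{V}(R,x)$ iff $k \geq k_x$, so the smallest $k$ meeting this condition for \emph{every} $x$ is precisely $\sup_x k_x = K$, when finite. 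The main obstacle is purely bookkeeping, namely translating the ``smallest integer'' formulation in the statement into the supremum description that emerges naturally from the intersection, and handling the $\omega$ case uniformly; no deeper ingredient beyond Theorem~\ref{vnrfacts}(7) is required.
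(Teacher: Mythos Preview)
Your proposal is correct and follows essentially the same approach as the paper: the paper's own proof is simply ``(1) follows directly from Theorem~\ref{vnrfacts}(7). Thus (2) holds by definition,'' and you have spelled out precisely the details implicit in that one line, invoking Theorem~\ref{vnrfacts}(1) and (7) exactly as intended and then passing to the intersection $\mathcal{V}(R)=\bigcap_{x}\mathcal{V}(R,x)$ for part~(2).
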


\begin{proof}
$(1)$ follows directly from Theorem~\ref{vnrfacts}(7). Thus $(2)$ holds by definition.
\end{proof}

These ideas can also be used to classify zero-dimensional commutative rings.

\begin{defn}  Let $R$ be a commutative ring and $n$ a positive integer.
\begin{enumerate}
\item  $R$ is $n$-regular if $\mathcal{V}(R) = \mathcal{B}_n$,
i.e., $n$ is the smallest positive integer such that for every $x \in R$ and
positive integer $m$, $x^n = x^mr_m$ for some $r_m \in R$.
\item  $R$ is $\omega$-regular if for every $x \in R$, $\mathcal{V}(R,x) = \mathcal{B}_{n_x}$
for some positive integer $n_x$, but $\mathcal{V}(R) = \mathcal{B}_{\omega}$.
\end{enumerate}
\end{defn}

A commutative ring $R$ is von Neumann regular if and only if it is $1$-regular,
and $R$ is strongly $\pi$-regular if and only if it is $n$-regular for some positive integer $n$.
Note that $R$ is $\pi$-regular if and only if every $x \in R$ is
$(m,n)$-vnr for some positive integers $m$ and $n$ with $m > n$,
but a $\pi$-regular ring may be $\omega$-regular (see Example~\ref{ex7}(d)).
Thus $R$ is $\alpha$-regular for $\alpha$ a positive integer or $\omega$ if and only if $R$ is $\pi$-regular, if and only if dim$(R) = 0$.
So, in some sense, this concept measures how far a zero-dimensional
commutative ring is from being von Neumann regular.

We next give several examples. In particular, we show that if $\alpha$ is any
positive integer or $\omega$, there is a quasilocal  commutative ring $R_{\alpha}$ that is $\alpha$-regular.

\begin{exm}  \label{ex7}
Let $R$ be a commutative ring.

$(a)$  Suppose that there is an $x \in R \setminus (Z(R) \cup U(R))$ (so dim$(R) > 0$).
Then $\mathcal{V}(R) =  \mathcal{V}(R,x) = \mathcal{B}_{\omega}$ by Theorem~\ref{vnrfacts}(4).
Thus $R$ is not $\omega$-regular or $n$-regular for any positive integer $n$.

$(b)$  Suppose that $R$ is quasilocal with maximal ideal $M = (x)$
with $x^k = 0$ and $x^{k-1} \neq 0$ for an integer $k \geq 2$.
Then  $\mathcal{V}(R) = \mathcal{B}_k$ by Theorem~\ref{vnrfacts}(3)(6); so $R$ is $k$-regular.
This also holds for $k = 1$ since a field is von Neumann regular.
In particular, for a prime $p$ and any positive integer $k$, $\mathcal{V}(\mathbb{Z}_{p^k}) = \mathcal{B}_k$, and
thus $\mathbb{Z}_{p^k}$ is $k$-regular.

$(c)$  Let $R_1$ and $R_2$ be commutative rings.
Then $x = (x_1,x_2) \in R_1 \times R_2$ is $(m,n)$-vnr if and only if
$x_1$ and $x_2$ are ($m,n)$-vnr in $R_1$ and $R_2$, respectively.
Thus $\mathcal{V}(R_1 \times R_2) = \mathcal{B}_k$, where $\mathcal{V}(R_1) = \mathcal{B}_{k_1}$,
$\mathcal{V}(R_2) = \mathcal{B}_{k_2}$, and $k = $max$\{k_1,k_2\}$;
so $R_1 \times R_2$ is max$\{k_1,k_2\}$-regular
when $R_1$ and $R_2$ are $k_1$-regular and $k_2$-regular, respectively.
In particular, for distinct primes $p_1, \ldots, p_r$, positive integers $k_1, \ldots, k_r$, and $k =$ max$\{k_1, \ldots, k_r \}$,
$\mathcal{V}(\mathbb{Z}_{p_1^{k_1}} \times \cdots \times \mathbb{Z}_{p_r^{k_r}}) =
\mathcal{B}_k$, and hence $\mathbb{Z}_{p_1^{k_1}} \times \cdots \times \mathbb{Z}_{p_r^{k_r}}$ is $k$-regular.

$(d)$  Let $R = \mathbb{Z}_2[\{ X_n \}_{n \in \mathbb{N}}]/(\{ X_n^{n+1} \}_{n \in \mathbb{N}})
= \mathbb{Z}_2[\{x_n\}_{n\in \mathbb{N}}]$.
Then $R$ is a zero-dimensional quasilocal commutative ring with maximal ideal $Nil(R) = (\{x_n\}_{n\in \mathbb{N}})$;
so $R$ is $\pi$-regular.
Thus every $x \in R$ has $\mathcal{V}(R,x) = \mathcal{B}_k$ for some positive integer $k$
and $\mathcal{V}(R,x_n) = \mathcal{B}_{n+1}$ by Theorem~\ref{vnrfacts}(3)(6);
so $\mathcal{V}(R) = \mathcal{B}_{\omega}$. Hence $R$ is $\omega$-regular.
\end{exm}

\bigskip

\end{document}